\begin{document}
\baselineskip 16pt

	\newtheorem{theorem}{Theorem}[section]
	\newtheorem{proposition}{Proposition}[section]
	\newtheorem{coro}{Corollary}[section]
	\newtheorem{lemma}{Lemma}[section]
	\newtheorem{definition}{Definition}[section]
	\newtheorem{assum}{Assumption}[section]
	\newtheorem{example}{Example}[section]
	\newtheorem{remark}{Remark}[section]
	\newcommand\Kone{K^{j,k}}
    \newcommand\Ktwo{\mathcal{K}^{j,k}}
    \newcommand\KKone{{\mathbb K}^{j,k}}
    \newcommand\KKtwo{\mathcal{B}^{j,k}}
	\newcommand\R{\mathbb{R}}
	\newcommand\RR{\mathbb{R}}
	\newcommand\CC{\mathbb{C}}
	\newcommand\NN{\mathbb{N}}
	\newcommand\ZZ{\mathbb{Z}}
	\def\RN {\mathbb{R}^n}

	\newcommand{\mc}{\mathcal}
	\newcommand\D{\mathcal{D}}	
	\newcommand{\supp}{{\rm supp}{\hspace{.05cm}}}
	\newcommand {\Rn}{{\mathbb{R}^{n}}}
	\newcommand {\rb}{\rangle}
	\newcommand {\lb}{{\langle}}
	\newtheorem{corollary}[theorem]{Corollary}
	\numberwithin{equation}{section}

\bibliographystyle{amsplain}

\title[Characterizations of harmonic quasiregular mappings]
{Characterizations of harmonic quasiregular\\ mappings in function spaces}

\subjclass[2020]{
31A05,
30H05, 30C62.}

\keywords{Harmonic quasiregular mappings; conjugate-type stability; function spaces.}
\thanks{$^*$Corresponding author.} 
\author[J. Sun, J. Liu and Z.-G. Wang]{Jihua Sun, Junming Liu and Zhi-Gang Wang$^*$}

\address{\noindent Jihua Sun\vskip.01in
		School of Mathematics and Statistics, Guangdong University of Technology, Guangzhou 510520, Guangdong,  P.~R.~China.}
	\email{\textcolor[rgb]{0.00,0.00,0.84}{2112414014$@$mail2.gdut.edu.cn}}

\address{\noindent Junming Liu\vskip.01in
		School of Mathematics and Statistics, Guangdong University of Technology, Guangzhou 510520, Guangdong,  P.~R.~China.}
	\email{\textcolor[rgb]{0.00,0.00,0.84}{jmliu$@$gdut.edu.cn}}

\address{\noindent Zhi-Gang Wang\vskip.01in
		School of Mathematics and Statistics, Hunan First Normal University, Changsha 410205, Hunan, P. R. China.}
	\email{\textcolor[rgb]{0.00,0.00,0.84}{wangmath$@$163.com}}

\date{\today}

\begin{abstract}
We study conjugate-type phenomena for complex-valued harmonic quasiregular mappings in the unit disk across three function space families: $Q(n,p,\alpha)$, $F(p,q,s)$, and the non-derivative $M(p,q,s)$. For a harmonic $K$-quasiregular mapping $f=u+iv$, we first show that if the real part $u$ belongs to $Q_h(1,p,\alpha)$ (with $\alpha>-1$ and $\alpha+1<p<\alpha+2$), the imaginary part $v$ lies in the same space with a $K$-dependent quantitative bound. An analogous stability result is established for the harmonic $F$-scale, with sharp $K$-dependence. These results are extended to harmonic $(K, K')$-quasiregular mappings, yielding explicit estimates with an additional inhomogeneous term involving $K'$. Finally, for normalized harmonic quasiconformal mappings, 
we derive membership criteria in the harmonic $M$- and $F$-scales, and obtain corresponding conclusions for their natural derivatives, with parameter ranges governed by the order $\alpha_K$ of the family of harmonic quasiconformal mappings.
\end{abstract}

\maketitle
\pagestyle{myheadings}

\tableofcontents	

\section{Introduction}
Let $\mathbb{D}:=\{z \in \mathbb{C}: |z| < 1\}$ denote the open unit disk in the complex plane $\mathbb{C}$, and let  
$\partial\mathbb{D}:=\{z\in\mathbb{C}: |z| = 1\}$ denote the unit circle. We denote by $H(\mathbb{D})$ the space of all analytic functions in $\mathbb{D}$, endowed with the topology of uniform convergence on compact subsets. For $f\in H(\mathbb{D})$ and $z=b e^{i\theta}$ with $b=|z|\in[0,1)$, by noting that $f(be^{i\theta})$ is independent of $\theta$ when $b=0$, it is natural to set
$f_\theta(0)=0$.
Furthermore, we adopt the convention $(bf_b)(0)=0$, ensuring that both $f_\theta$ and $bf_b$
are well-defined on $\mathbb D$.

For $z=x+iy$ and a complex-valued function $f$ on a domain $\Omega\subset\mathbb C$,
we write the Wirtinger derivatives as
\[
f_z=\frac12\left(f_x-i f_y\right),\quad
f_{\bar z}=\frac12\left(f_x+i f_y\right),
\]
whenever the partial derivatives exist. We refer the reader to \cite[Chapters~1-2]{ABR} for background on harmonic function theory and related notations.

For $\theta\in[0,2\pi)$, the directional derivative of $f$ at $z$ is defined by
\[
\partial_\theta f(z)
= f_z(z)e^{i\theta}+f_{\bar z}(z)e^{-i\theta}.
\]
Accordingly, we set
\[
\Lambda_f(z):=\max_{\theta\in[0,2\pi)}|\partial_\theta f(z)|
=|f_z(z)|+|f_{\bar z}(z)|
\]
and
\[\lambda_f(z):=\min_{\theta\in[0,2\pi)}|\partial_\theta f(z)|
=\bigl||f_z(z)|-|f_{\bar z}(z)|\bigr|.\]

For $K\ge 1$, a sense-preserving harmonic function $f$ is said to be $K$-quasiregular if the
dilatation
\[
D_f:=\frac{|f_{z}|+|f_{\bar z}|}{|f_{z}|-|f_{\bar z}|}\le K
\]
throughout $\mathbb{D}$. Writing $f=h+\overline{g}$, this condition is equivalent to the requirement that its analytic dilatation $w:={g'}/{h'}$ satisfies the inequality
\[
|w(z)|\le k<1 \quad (z\in\mathbb{D}),
\]
where
\begin{equation}\label{kandK}
k:=\frac{K-1}{K+1}.
\end{equation}
The function $f$ is called $K$-quasiconformal if it is $K$-quasiregular and homeomorphic in
$\mathbb{D}$.

A mapping $f:\Omega\to\mathbb C$ is said to be absolutely continuous on lines (ACL) in $\Omega$
if $f$ is absolutely continuous on almost every horizontal line segment and almost every vertical
line segment contained in $\Omega$. We write $f\in {\rm ACL}^2(\Omega)$ if $f\in {\rm ACL}(\Omega)$ and
$f_x,f_y\in L^2_{\rm loc}(\Omega)$.

The Jacobian of $f$ is defined by
\[
J_f = |f_z|^2-|f_{\bar z}|^2 = \Lambda_f\,\lambda_f.
\]
Following the standard definition, we say that $f$ is a {$(K,K')$-quasiregular mapping} in $\Omega$
if $f\in {\rm ACL}^2(\Omega)$, $J_f\ge 0$ a.e. in $\Omega$, and
\[
\Lambda^2_f(z)\le K\,J_f(z)+K'
\] 
for a.e. $z\in\Omega$,
where $K\ge 1$ and $K'\ge 0$ are constants.
In particular, when $K'=0$, $f$ reduces to the classic {$K$-quasiregular} mapping.

If, in addition, $f$ is harmonic in $\mathbb D$, then there exist analytic functions $h,g\in H(\mathbb D)$
such that
\[
f=h+\overline g,
\]
and (with the normalization $g(0)=0$ if needed) this decomposition is unique up to an additive constant.
In this case, we have
\[
f_z=h'(z),\quad f_{\bar z}=\overline{g'(z)},
\]
and the (second) complex dilatation is
\[
w(z):=\frac{g'(z)}{h'(z)}\quad (h'(z)\neq 0).
\]
The mapping is sense-preserving in $\mathbb D$ if and only if $J_f>0$, 
or equivalently, $|w(z)|<1$ in $\mathbb D$.

Let $\mathcal S_H$ denote the family of all sense-preserving univalent harmonic mappings
$f=h+\overline g$ in $\mathbb D$ satisfying the normalizations
$h(0)=h'(0)-1=g(0)=0.$

For $K\ge 1$, we set
\[
\mathcal S_H(K):=\left\{f\in\mathcal S_H: f \text{ is } K\text{-quasiconformal in }\mathbb D\right\}.
\]
The \emph{order} of $\mathcal S_H(K)$ is defined by
\[
\alpha_K:=\frac{1}{2}\sup_{f\in\mathcal S_H(K)}|h''(0)|.
\] 
We note that $\alpha_K$ is conjectured to be ${(3K+1)}/{(K+1)}$ in Conjecture 2 of \cite{wwrq},
For more investigations on harmonic quasiconformal mappings, see e.g., \cite{cp,p,LZ,whlk,wsj}.

Wang-Wang-Rasila-Qiu \cite{wwrq} (see also \cite{wqr}) constructed the harmonic quasiconformal Koebe function, which provides novel perspectives and tools for the study of the theory of harmonic quasiconformal mappings. Li-Ponnusamy \cite{lp} have verified that the harmonic quasiconformal Koebe function is indeed extremal for several subclasses with geometric properties of harmonic quasiconformal mappings. Furthermore, Das-Huang-Rasila \cite{dhr} studied the Hardy spaces of convex and close-to-convex harmonic quasiconformal mappings of the class $\mathcal S_H(K)$, Das-Rasila \cite{DR} obtained the order of the class $\mathcal S_H(K)$ belongs to harmonic Bergman spaces.

Problems of conjugate type originate from classical results for analytic functions.
Regarding the Hardy spaces, the celebrated theorem of Riesz asserts that harmonic conjugation is bounded on $H^p$
for $1<p<\infty$, see e.g., \cite{DP1}.
At the endpoints, one needs finer substitutes (weak-type and BMO-type replacements),
and the real-variable $H^p$ framework provides a systematic language for such phenomena (cf.\ \cite{DP1,FS}).
In contrast, Hardy-Littlewood showed that if $f=u+iv$ is analytic in $\mathbb D$, then
membership of $u$ in the (analytic) Bergman space $A^p$ forces $v$ to belong to the same space for every
$0<p<\infty$, see \cite{HL} (and also the standard accounts \cite{DS,ZK}).
In other words, Bergman-type integrability behaves markedly better than Hardy-type integrability
with respect to taking conjugates.

For general complex-valued harmonic mappings, these analytic conclusions do not carry over verbatim.
A key insight in the recent literature is that geometric control of the mapping restores
conjugate-type principles in the harmonic setting:
quasiconformality/quasiregularity turns out to be a natural sufficient hypothesis for Riesz-, Kolmogorov-, and Zygmund-type statements for harmonic mappings, see e.g., \cite{AK,CHWX,CK,DHR,KD1,KD2}.
Here we follow \cite{CK} and say that $f$ is {harmonic $(K,K')$-quasiregular} in a domain $\Omega$
if $f$ is harmonic in $\Omega$ and $(K,K')$-quasiregular in $\Omega$, where $K\ge1$ and $K'\ge0$ are constants.

Within this direction, several recent contributions sharpen and extend the classical paradigm.
For instance, Das-Rasila \cite{DR} proved that the Hardy-Littlewood implication on Bergman spaces remains valid
for harmonic $K$-quasiregular mappings in $\mathbb D$, and they further obtained Bergman-type integrability
for natural derivatives of mappings in $\mathcal S_H(K)$ (see \cite{DRN,LZ}).
Related quantitative estimates for univalent (or locally univalent) harmonic mappings are developed in \cite{DSK},
while boundary behaviour for harmonic univalent maps goes back at least to \cite{AML}.
Very recently, Chen-Huang-Wang-Xiao \cite{CHWX} and Chen-Kalaj \cite{CK} investigated sharp conjugate-type
inequalities for quasiregular and harmonic $(K,K')$-quasiregular mappings, including Riesz-, Kolmogorov-
and Zygmund-type theorems and sharp (or asymptotically sharp) constants for model classes.

Motivated essentially by the above developments, we study conjugate-type properties in a unified scale of Möbius-invariant
function spaces, notably the families $F(p,q,s),~Q(n,p,\alpha)$ and its prominent special cases such as Morrey spaces $\mathcal{L}^{2,\lambda}$,
Bergman--Morrey spaces $A^{p,\lambda}$, and $Q_s$-spaces, see e.g., \cite{LL,WJ,XJ1,XJ2,YL,ZR,ZR1,ZK,ZK1,z2}.

This paper is organized as follows. Section~\ref{section02} provides some preliminary results.
Section~\ref{section2} is devoted to conjugate-type stability for harmonic quasiregular mappings in two derivative-based scales, namely, the families $Q_h(1,p,\alpha)$ and $F_h(p,q,s)$.
We first prove that for a harmonic $K$-quasiregular mapping $f=u+iv$, the condition $u\in Q_h(1,p,\alpha)$ (with $\alpha>-1$ and $\alpha+1<p<\alpha+2$) implies $v\in Q_h(1,p,\alpha)$ together with the sharp estimate in Theorem~\ref{v<u}.
We then establish the corresponding result in the $F$-scale (Theorem~\ref{v<u-F}), and record several standard specializations as corollaries, see Corollaries~\ref{coro:morrey-K}-\ref{coro:Qs-K}.
Finally, we extend both stability principles to harmonic $(K,K')$-quasiregular mappings and obtain explicit norm bounds with an additional term depending on $K'$, see Theorems~\ref{thm:Qh-CK21} and \ref{thm:Fh-KK'} and their consequences in Corollaries~\ref{coro:morrey-KK'}-\ref{coro:Qs-KK'}.
Section~\ref{section3} concerns normalized univalent harmonic mappings in $\mathcal S_H(K)$ and focuses on integrability consequences in the non-derivative scale $M_h(p,q,s)$ and the derivative-based scale $F_h(p,q,s)$.
We show that each $f\in\mathcal S_H(K)$ belongs to $M_h(p,q,s)$ in a full parameter range determined by the order $\alpha_K$, and we also obtain membership results for the derivatives $f_z$, $\overline{f_{\bar z}}$, $f_\theta$, and $b f_b$, see Theorem~\ref{M-th3}.
The corresponding assertions in the $F_h$-scale are established in Theorem~\ref{F-th3}. The arguments in Section~\ref{section3} are partially inspired by, and adapted from,
the methods developed by Das and Rasila in \cite{DR} for harmonic quasiregular mappings in Bergman spaces,
and we include this remark for proper attribution.

To avoid ambiguity, throughout this paper, we use $C$ to denote a positive constant, which may change from line to line. We write $A \lesssim B$ if $A \leq C B$, and $A \thickapprox B$ if both $A \lesssim B$ and $B \lesssim A$ hold.

\section{Preliminaries}\label{section02}

In order to prove our main results, we need the following definitions and lemmas.

\begin{definition}\label{def:Qnpa}
{\rm Let $0<p<\infty$, $-1<\alpha<\infty$, and let $n\in\NN$ be a positive integer. Denote by $Q(n,p,\alpha)$ the space of all analytic functions $f$ in $\mathbb D$ such that
\[
\|f\|^{p}_{n,p,\alpha}
:=\sup_{a\in\mathbb D}\int_{\mathbb D}\bigl|(f\circ \sigma_a)^{(n)}(z)\bigr|^{p}\,(1-|z|^{2})^{\alpha}\,dA(z)<\infty,
\]
where $$\sigma_a(z)=\dfrac{a-z}{1-\overline{a}z}$$ is a M\"obius automorphism of $\mathbb D$ and $dA$ is the area measure on $\mathbb D$.}
\end{definition}

The space $Q(n,p,\alpha)$ is M\"obius invariant in the sense that an analytic function $f$ belongs to $Q(n,p,\alpha)$ if and only if $f\circ\sigma$ belongs to $Q(n,p,\alpha)$ for every $\sigma\in {\rm Aut}(\mathbb D)$, moreover,
\[
\|f\circ\sigma\|_{n,p,\alpha}=\|f\|_{n,p,\alpha},\quad f\in Q(n,p,\alpha),\quad \sigma\in {\rm Aut}(\mathbb D).
\]
When $p\ge1$, $|f(0)|+\|f\|_{n,p,\alpha}$ defines a complete norm on $Q(n,p,\alpha)$, hence $Q(n,p,\alpha)$ is a Banach space; when $0<p<1$, $Q(n,p,\alpha)$ is not necessarily Banach but it is always complete as a metric space. Furthermore, $Q(n,p,\alpha)$ is trivial when $np>\alpha+2$, it contains all polynomials when $np\le \alpha+2$, and it coincides with the Besov space $B_p$ when $np=\alpha+2$; in addition, $Q(n,p,\alpha)\subset\mathcal{B}$ (the Bloch space). As special identifications, if $np<\alpha+1$, then $Q(n,p,\alpha)=\mathcal{B}$, while if $p=2$ and $\alpha=2n-1$, then $Q(n,p,\alpha)=\mathrm{BMOA}$. In the special case $n=1$ and $p=2$, one may apply a change of variables to rewrite
\[
\int_{\mathbb D}\bigl|(f\circ\sigma_a)'(z)\bigr|^{2}(1-|z|^{2})^{\alpha}\,dA(z)
\]
in the equivalent form
\[
\int_{\mathbb D}|f'(z)|^{2}\,(1-|\sigma_a(z)|^{2})^{\alpha}\,dA(z),
\]
so that the corresponding spaces $Q(n,p,\alpha)$ reduce to the classical $Q_{\alpha}$ scale. More generally, if $2n<\alpha+3$, then
$Q(n,2,\alpha)=Q_{\beta},~\beta=\alpha-2(n-1).$
Standard references \cite{XJ1} provide further background on the theory of $Q_{\alpha}$ spaces.

\begin{definition} {\rm(\cite{ZR})
 Let  $p, q$  and  $s$  be real constants such that  $0<p<\infty,-2<q<\infty$  and  $0<s<\infty.$ We say that a function  $f \in H(\mathbb{D})$  belongs to the space  $F(p, q, s)$  if
$$\|f\|_{p, q, s}^{p}=\sup _{a \in \mathbb{D}} \int_{\mathbb{D}}\left|f^{\prime}(z)\right|^{p}\left(1-|z|^{2}\right)^{q} g^{s}(z, a) \mathrm{d} A(z)<\infty,$$
where  $$g(z, a)=-\log \left|\sigma_{a}(z)\right|=\log \left|\frac{1-\bar{a} z}{a-z}\right|$$  is the Green's function of $\mathbb{D}$  with logarithmic singularity at $a \in \mathbb{D}.$}    
\end{definition}

Originating in Zhao’s work \cite{ZR1}, the three-parameter family $F(p,q,s)$ provides a unified framework that encompasses a host of classical analytic spaces-Hardy, Bergman, Dirichlet, Besov, Bloch and $BMOA$ among them-allowing one to move seamlessly between them by tuning $(p,q,s).$ Moreover, the following characterizations of $F(p,q,s)$ using Carleson measure are from \cite[Theorem 3.3]{ZR1}:

\begin{lemma}
Let  $0<p<\infty,-2<q<\infty, 0< s<\infty$  satisfy  $q+s>-1,$ let  $f$  be analytic on  $\mathbb{D},$ and let  $$d\mu_{f}(z)=\left|f^{\prime}(z)\right|^{p}\left(1-|z|^{2}\right)^{q+s} dA(z).$$ Then the following statements are equivalent:

$(i) ~~f \in F(p, q, s) ;$

$(ii) ~\sup _{a \in \mathbb{D}} \int_{\mathbb{D}}\left|f^{\prime}(z)\right|^{p}\left(1-|z|^{2}\right)^{q}\left(1-\left|\sigma_{a}(z)\right|^{2}\right)^{s} \mathrm{~d} A(z)<\infty ;$

$(iii)~~d\mu_{f} \text{is a bounded } $s-$\text{Carleson measure.}$

\noindent Moreover, we have
$$\|f\|_{p, q, s}^{p} \approx \sup _{a \in \mathbb{D}} \int_{\mathbb{D}}\left|f^{\prime}(z)\right|^{p}\left(1-|z|^{2}\right)^{q}\left(1-\left|\sigma_{a}(z)\right|^{2}\right)^{s} dA(z).$$ 
\end{lemma}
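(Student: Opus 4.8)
The plan is to establish the chain as $(i)\Leftrightarrow(ii)$ together with $(ii)\Leftrightarrow(iii)$. For the first equivalence I would compare the two weights $g^{s}(z,a)$ and $\bigl(1-|\sigma_a(z)|^{2}\bigr)^{s}$; for the second I would use a change of variables turning the integral in $(ii)$ into a Carleson-type kernel integral against $\mu_{f}$ and then quote the classical description of $s$-Carleson measures. All estimates have to be uniform in $a\in\mathbb D$, and the standing hypothesis $q+s>-1$ is exactly what keeps $F(p,q,s)$ nontrivial and $\mu_{f}$ inside the regime covered by that classical description. The easy half of $(i)\Leftrightarrow(ii)$ rests on the elementary pointwise inequality
\[
g(z,a)=-\log|\sigma_a(z)|\ \ge\ \tfrac12\bigl(1-|\sigma_a(z)|^{2}\bigr),\qquad z\in\mathbb D,
\]
which follows since $\varphi(r):=-\log r-\tfrac12(1-r^{2})$ satisfies $\varphi'(r)=(r^{2}-1)/r\le0$ on $(0,1)$ and $\varphi(1)=0$; raising to the power $s$ and integrating shows that the supremum $N$ in $(ii)$ obeys $N\le2^{s}\|f\|_{p,q,s}^{p}$, so $(i)\Rightarrow(ii)$ and one half of the norm comparison is automatic.

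The substantive step is $(ii)\Rightarrow(i)$ with $\|f\|_{p,q,s}^{p}\lesssim N$. Here I would fix $a$ and split $\mathbb D=E_{a}\cup D_{a}$, $E_{a}=\{z:|\sigma_a(z)|\ge\tfrac12\}$, $D_{a}=\{z:|\sigma_a(z)|<\tfrac12\}$. On $E_{a}$ one has $g(z,a)\le\log2$ and $1-|\sigma_a(z)|^{2}\ge\tfrac34$, so $g^{s}(z,a)\lesssim\bigl(1-|\sigma_a(z)|^{2}\bigr)^{s}$ pointwise and that part is $\lesssim N$ at once. On $D_{a}$ the point $z$ lies in a pseudo-hyperbolic disk of radius $\tfrac12$ about $a$, so $1-|z|^{2}\approx1-|a|^{2}$ and $|1-\bar a z|\approx1-|a|^{2}$ there with absolute constants; substituting $w=\sigma_a(z)$ and putting $F:=f\circ\sigma_a$ (so that $|f'(z)|^{p}\,dA(z)\approx|F'(w)|^{p}(1-|a|^{2})^{2-p}\,dA(w)$ on $D_{a}$), the $D_{a}$-integral becomes, up to constants, $(1-|a|^{2})^{q+2-p}\int_{|w|<1/2}|F'(w)|^{p}(-\log|w|)^{s}\,dA(w)$. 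Subharmonicity of $|F'|^{p}$ gives $\sup_{|w|\le1/2}|F'(w)|^{p}\lesssim\int_{|w|<3/4}|F'(w)|^{p}\,dA(w)$, and since $(-\log|w|)^{s}$ is integrable over $\{|w|<\tfrac12\}$ the previous quantity is $\lesssim(1-|a|^{2})^{q+2-p}\int_{|w|<3/4}|F'(w)|^{p}\,dA(w)$. Reversing the substitution and using $1-|z|^{2}\approx1-|a|^{2}$ and $1-|\sigma_a(z)|^{2}\gtrsim1$ on $\{|\sigma_a(z)|<\tfrac34\}$, this is
\[
\lesssim\int_{\{|\sigma_a(z)|<3/4\}}|f'(z)|^{p}(1-|z|^{2})^{q}\,dA(z)
\ \lesssim\ \int_{\mathbb D}|f'(z)|^{p}(1-|z|^{2})^{q}\bigl(1-|\sigma_a(z)|^{2}\bigr)^{s}\,dA(z)\le N.
\]
Taking the supremum over $a$ closes $(ii)\Rightarrow(i)$ and yields $\|f\|_{p,q,s}^{p}\approx N$, which is the ``moreover'' clause.

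For $(ii)\Leftrightarrow(iii)$ I would begin from $1-|\sigma_a(z)|^{2}=(1-|a|^{2})(1-|z|^{2})/|1-\bar a z|^{2}$, which produces the identity
\[
\int_{\mathbb D}|f'(z)|^{p}(1-|z|^{2})^{q}\bigl(1-|\sigma_a(z)|^{2}\bigr)^{s}\,dA(z)
=\int_{\mathbb D}\left(\frac{1-|a|^{2}}{|1-\bar a z|^{2}}\right)^{s}d\mu_{f}(z),
\]
so that $(ii)$ is precisely $\sup_{a\in\mathbb D}\int_{\mathbb D}\bigl((1-|a|^{2})/|1-\bar a z|^{2}\bigr)^{s}\,d\mu_{f}(z)<\infty$. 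I would then invoke the classical equivalence, valid for all $s>0$: a positive Borel measure $\mu$ on $\mathbb D$ is a bounded $s$-Carleson measure if and only if this kernel supremum is finite, one direction coming from testing the kernel on a Carleson box with an evaluation point pushed toward its outer edge, the other from a dyadic decomposition of $\mathbb D$ into Carleson boxes. This gives $(ii)\Leftrightarrow(iii)$ and closes the loop.

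The main obstacle is the analysis on $D_{a}$ in the step $(ii)\Rightarrow(i)$: one must verify that the comparisons $1-|z|^{2}\approx1-|a|^{2}$, $|1-\bar a z|\approx1-|a|^{2}$ and the Jacobian factor $|\sigma_a'(w)|\approx1-|a|^{2}$ all hold on $D_{a}$ — and likewise on $\{|\sigma_a(z)|<\tfrac34\}$ — with constants independent of $a$, and that the sub-mean-value estimate for the subharmonic function $|F'|^{p}$ is applied only on Euclidean disks genuinely contained in $\mathbb D$. Once this $a$-uniform bookkeeping is in place the remaining computations are routine, and the $s$-Carleson equivalence used at the end is standard.
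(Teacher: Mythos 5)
This lemma is not proved in the paper at all: it is quoted verbatim from Zhao \cite[Theorem 3.3]{ZR1} as a known result, so there is no in-paper argument to compare yours against. Judged on its own merits, your proof follows the standard route used in the literature: pointwise comparison of $g^{s}(z,a)$ with $\bigl(1-|\sigma_a(z)|^{2}\bigr)^{s}$ away from $a$, a change of variables plus a sub-mean-value estimate for the subharmonic function $|F'|^{p}$ to absorb the logarithmic singularity of the Green's function near $a$, and the exact identity $(1-|z|^{2})^{q}\bigl(1-|\sigma_a(z)|^{2}\bigr)^{s}=(1-|z|^{2})^{q+s}\bigl((1-|a|^{2})/|1-\bar a z|^{2}\bigr)^{s}$ combined with the classical kernel characterization of $s$-Carleson measures. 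The architecture and the quantitative bookkeeping on $D_a$ (the comparabilities $1-|z|^{2}\approx|1-\bar a z|\approx 1-|a|^{2}$ on pseudo-hyperbolic disks of fixed radius, the Jacobian factor, integrability of $(-\log|w|)^{s}$, and the final absorption using $1-|\sigma_a(z)|^{2}\gtrsim 1$ on $\{|\sigma_a(z)|<\tfrac34\}$) are all correct and uniform in $a$.

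One local justification is garbled and should be repaired. On $E_{a}=\{|\sigma_a(z)|\ge\tfrac12\}$ you assert $1-|\sigma_a(z)|^{2}\ge\tfrac34$; the inequality goes the other way there (it is on $D_{a}$ that $1-|\sigma_a(z)|^{2}>\tfrac34$), and in any case the pair of facts you cite ($g\le\log 2$ together with a lower bound on $1-|\sigma_a|^{2}$) could not by itself yield the needed pointwise domination $g^{s}(z,a)\lesssim\bigl(1-|\sigma_a(z)|^{2}\bigr)^{s}$, because $1-|\sigma_a(z)|^{2}\to0$ as $|\sigma_a(z)|\to1$ inside $E_{a}$ while boundedness of $g$ alone says nothing in that limit. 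What you actually need --- and what is true --- is the elementary inequality $-\log r\le\frac{1-r}{r}\le 2(1-r)\le 2(1-r^{2})$ for $\tfrac12\le r<1$, which gives $g(z,a)\le 2\bigl(1-|\sigma_a(z)|^{2}\bigr)$ on $E_{a}$ and hence the claimed bound after raising to the power $s$. With that one line corrected, the argument is complete and yields the norm equivalence stated in the ``moreover'' clause.
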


This scheme specializes to many familiar identifications, for example $F(2,0,1)=BMOA.$ Thereby highlighting the role of $F(p,q,s)$ as a common umbrella for Möbius-invariant and weighted theories.

\begin{definition}
{\rm For  $-2<q<\infty,~0<s<\infty,~q+s>-1.$ associated with the $F(p,q,s)$ spaces, the spaces  $M(p,q,s)$ consist of $f\in H(\mathbb{D})$ such that
$$\|f\|_{M(p,q,s)}=|f(0)|+\left(\sup _{a \in \mathbb{D}} \int_{\mathbb{D}}|f(z)|^{p}\left(1-|z|^{2}\right)^{q}\left(1-\left|\sigma_{a}(z)\right|^{2}\right)^{s} d A(z)\right)^{\frac{1}{p}}<\infty.$$}
\end{definition}

Clearly, when $p\ge 1,$ $M(p,q,s)$  is a Banach space with the above norm.

\begin{definition}\label{def:Qh-npa}
{\rm Let $0<p<\infty$, $-1<\alpha<\infty$, and let $n\in\NN$ be a positive integer.
A complex-valued harmonic function $f$ in $\mathbb D$ is said to belong to the harmonic space $Q_h(n,p,\alpha)$ if
\[
\|f\|_{Q_h(n,p,\alpha)}^{p}
:=\sup_{a\in\mathbb D}\int_{\mathbb D}\Bigl(\bigl|(h\circ\sigma_a)^{(n)}(z)\bigr|+\bigl|(g\circ\sigma_a)^{(n)}(z)\bigr|\Bigr)^{p}
(1-|z|^{2})^{\alpha}\,dA(z)<\infty,
\]
where $f=h+\overline g$ is the canonical decomposition with $h,g\in H(\mathbb D)$ and $g(0)=0.$

In particular, when $n=1$, one may write equivalently
\begin{align*}
\|f\|_{Q_h(1,p,\alpha)}^{p}
&=\sup_{a\in\mathbb D}\int_{\mathbb D}\Lambda_{\,f\circ\sigma_a}(z)^{p}(1-|z|^{2})^{\alpha}\,dA(z)\\
&\approx\sup_{a\in\mathbb D}\int_{\mathbb D}\bigl|\nabla(f\circ\sigma_a)(z)\bigr|^{p}(1-|z|^{2})^{\alpha}\,dA(z),
\end{align*}
where $$\Lambda_f(z):=|f_z(z)|+|f_{\bar z}(z)|$$ and $$|\nabla f(z)|:=\sqrt{|f_x(z)|^2+|f_y(z)|^2},$$
and the pointwise comparability $$\Lambda_f(z)\le |\nabla f(z)|\leq\sqrt2\Lambda_f(z)$$ holds for all $z\in\mathbb D$.}
\end{definition}

\begin{definition}\label{def:Mh-pqs}
{\rm Let $0<p<\infty$, $-2<q<\infty$, $0<s<\infty$, and $q+s>-1$.
A complex-valued harmonic function $f$ in $\mathbb D$ is said to belong to the harmonic space $M_h(p,q,s)$ if
\[
\|f\|_{M_h(p,q,s)}
:=|f(0)|+\left(\sup_{a\in\mathbb D}\int_{\mathbb D}|f(z)|^{p}(1-|z|^{2})^{q}\bigl(1-|\sigma_a(z)|^{2}\bigr)^{s}\,dA(z)\right)^{\frac1p}<\infty.
\]}
\end{definition}

\begin{definition}\label{def:Fh-pqs}
{\rm Let $0<p<\infty$, $-2<q<\infty$, $0<s<\infty$, and $q+s>-1$.
A complex-valued harmonic function $f$ in $\mathbb D$ is said to belong to the harmonic space $F_h(p,q,s)$ if
\[
\|f\|_{F_h(p,q,s)}^{p}
:=\sup_{a\in\mathbb D}\int_{\mathbb D}\Lambda_f(z)^{p}(1-|z|^{2})^{q}\,g(z,a)^{s}\,dA(z)<\infty.
\]
Equivalently, one may replace $\Lambda_f$ by the (real) gradient size
\[
|\nabla f(z)|:=\sqrt{|f_x(z)|^2+|f_y(z)|^2},
\]
since $$\Lambda_f(z)\leq|\nabla f(z)|\leq\sqrt2\Lambda_f(z)$$ for all $z\in\mathbb D$. Moreover, one also has the equivalent semi-norm
\[
\|f\|_{F_h(p,q,s)}^{p}\approx
\sup_{a\in\mathbb D}\int_{\mathbb D}\Lambda_f(z)^{p}(1-|z|^{2})^{q}\bigl(1-|\sigma_a(z)|^{2}\bigr)^{s}\,dA(z).
\]}
\end{definition}

\begin{definition}
{\rm Let $\alpha > 0$. The Bloch-type space $\mathcal{B}_\alpha$ consists of all functions $f \in H(\mathbb{D})$ such that
\[\|f\|_{\mathcal{B}_\alpha} := |f(0)| + \sup_{z \in \mathbb{D}} \left(1-|z|^2\right)^\alpha |f'(z)|.\]}
\end{definition}
When $\alpha = 1$, we obtain the classical Bloch space $\mathcal{B} := \mathcal{B}_1$. For more details on Bloch-type spaces, see~\cite{ZK1}.

\begin{definition} {\rm (\cite{XJ2})
For $s>0,$ a function $f$ in $H(\mathbb{D})$ is said to belong to $Q_{s}$ space if
$$\|f\|_{Q_s}^2 =|f(0)|^2+ \sup _{a \in \mathbb{D}} \int_{\mathbb{D}}\left|f^{\prime}(z)\right|^{2}\left(1-\left|\sigma_{a}(z)\right|^{2}\right)^{s} dA(z).$$ }
\end{definition}

\begin{definition} {\rm (\cite{LL})
	For $0<\lambda\leq1$, a function $f$ in $H(\mathbb{D})$ is said to belong to Morrey space $\mathcal{L}^{2,\lambda}$ if
	$$\|f\|_{\mathcal{L}^{2,\lambda}}:=|f(0)|+	\sup\limits_{a\in\mathbb{D}} (1-|a|^{2})^{\frac{1-\lambda}{2}}\|f\circ\sigma_a-f(a)\|_{H^2}<\infty,$$
	where $$\sigma_{a}(z)=\frac{a-z}{1-\overline{a}z}.$$} 
\end{definition}
It is well known that $\mathcal{L}^{2,\lambda }$ is a Banach space under the norm $\left \| f \right \|_{\mathcal{L}^{2,\lambda } }.$

\begin{lemma}\label{equivalent conditions} {\rm (\cite{LL,WJ,XJ2})}
	For any function $f \in \mathcal{L}^{2, \lambda}(\mathbb{D}) $, the following norm equivalences hold: 
	\begin{align*}
		(i)\, & \|f\|_{\mathcal{L}^{2, \lambda}}\approx|f(0)|+\sup _{I \subset \partial \mathbb{D}}\left(\frac{1}{|I|^{\lambda}} \int_{S(I)}\left|f^{\prime}(z)\right|^{2}\left(1-|z|^{2}\right) d A(z)\right)^{1 / 2}  ;\\
		(ii)\, &\|f\|_{\mathcal{L}^{2, \lambda}}\approx|f(0)|+\sup _{a \in \mathbb{D}}\left(\left(1-|a|^{2}\right)^{1-\lambda} \int_{\mathbb{D}}\left|f^{\prime}(z)\right|^{2}\left(1-\left|\sigma_{a}(z)\right|^{2}\right) d A(z)\right)^{1 / 2} ;\\
		(iii)\, &\|f\|_{\mathcal{L}^{2, \lambda}}\approx|f(0)|+\sup _{a \in \mathbb{D}}\left(\left(1-|a|^{2}\right)^{1-\lambda} \int_{\mathbb{D}}\left|f^{\prime}(z)\right|^{2} \log \frac{1}{\left|\sigma_{a}(z)\right|} d A(z)\right)^{1 / 2}.
	\end{align*}
\end{lemma}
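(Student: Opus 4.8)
The plan is to reduce all three descriptions to one classical identity — the Littlewood--Paley formula for the $H^2$-norm — together with a change of variables along the M\"obius maps $\sigma_a$, and then to convert the resulting M\"obius-invariant integral into the Carleson-box form by the usual dictionary between the two.

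The starting point is the elementary fact that, for every $F\in H(\mathbb D)$ with $F(0)=0$,
\[
\|F\|_{H^2}^2\thickapprox\int_{\mathbb D}|F'(z)|^2\log\frac1{|z|}\,dA(z)\thickapprox\int_{\mathbb D}|F'(z)|^2(1-|z|^2)\,dA(z),
\]
the first comparison being the Littlewood--Paley identity and the second following from the pointwise comparability $\log\frac1{|z|}\thickapprox 1-|z|^2$ on $\{|z|\ge\tfrac12\}$ together with the sub-mean-value inequality for the subharmonic function $|F'|^2$ near the origin (which controls $\int_{|z|<1/2}|F'|^2\log\frac1{|z|}\,dA$ by a multiple of $\int_{\mathbb D}|F'|^2(1-|z|^2)\,dA$). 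I would then apply this with $F=f\circ\sigma_a-f(a)$, noting $F(0)=0$ and $F'(z)=f'(\sigma_a(z))\,\sigma_a'(z)$, and substitute $w=\sigma_a(z)$; since $\sigma_a$ is an involution, $dA(z)=|\sigma_a'(w)|^2\,dA(w)$ and $\sigma_a'(z)\sigma_a'(w)=1$, so all automorphy factors cancel and the two right-hand integrals become
\[
\int_{\mathbb D}|f'(w)|^2(1-|\sigma_a(w)|^2)\,dA(w)\qquad\text{and}\qquad\int_{\mathbb D}|f'(w)|^2\,g(w,a)\,dA(w),
\]
since $\log\frac1{|z|}=\log\frac1{|\sigma_a(w)|}=g(w,a)$. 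Multiplying through by $(1-|a|^2)^{1-\lambda}$, taking square roots, adding $|f(0)|$, and taking the supremum over $a\in\mathbb D$ then shows that $\|f\|_{\mathcal{L}^{2,\lambda}}$ is comparable to the right-hand sides of both $(ii)$ and $(iii)$.

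It remains to match $(ii)$ with the Carleson-box expression $(i)$. For an arc $I\subset\partial\mathbb D$ with midpoint $\zeta_I$, put $a_I:=(1-|I|)\zeta_I$; on the box $S(I)$ one has $|1-\overline{a_I}\,z|\thickapprox|I|\thickapprox 1-|a_I|^2$, hence $1-|\sigma_{a_I}(z)|^2\thickapprox(1-|z|^2)/|I|$ there, and restricting the integral in $(ii)$ (with $a=a_I$) to $S(I)$ already gives $\gtrsim|I|^{-\lambda}\int_{S(I)}|f'(z)|^2(1-|z|^2)\,dA(z)$; the supremum over $I$ yields $(i)\lesssim(ii)$. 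For the reverse inequality I would fix $a\in\mathbb D$, let $I_a$ be the arc of length $\thickapprox 1-|a|$ centered at $a/|a|$, and decompose $\mathbb D$ into the top box $S(I_a)$ together with the shells $S(2^jI_a)\setminus S(2^{j-1}I_a)$, $j\ge1$; on the $j$-th shell one has $1-|\sigma_a(z)|^2\lesssim 2^{-2j}(1-|z|^2)/(1-|a|)$, so the contribution of that shell to the integral in $(ii)$ is at most a constant multiple of $2^{j(\lambda-2)}\sup_I|I|^{-\lambda}\int_{S(I)}|f'(z)|^2(1-|z|^2)\,dA(z)$, and summing the geometric series — convergent because $\lambda<2$ — gives $(ii)\lesssim(i)$.

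The only place that calls for genuine care is this last step: choosing the Whitney-type decomposition of $\mathbb D$ adapted to the point $a$, extracting the correct geometric decay of $1-|\sigma_a(z)|^2$ on each shell, and checking that the series of contributions converges. Everything else is formal once the Littlewood--Paley identity and the elementary comparison of $\log(1/t)$ with $1-t^2$ (with the subharmonic mean-value correction near the origin) are in hand.
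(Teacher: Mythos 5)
The paper does not prove this lemma at all: it is quoted verbatim from the literature (\cite{LL,WJ,XJ2}) and used as a known fact, so there is no in-paper argument to compare against. Your blind proof is, however, correct and is essentially the standard route taken in those references. The reduction of $(ii)$ and $(iii)$ to the definition via the Littlewood--Paley identity for $F=f\circ\sigma_a-f(a)$ (with $F(0)=0$) and the involutive change of variables $w=\sigma_a(z)$ is clean; the cancellation $\sigma_a'(z)\sigma_a'(w)=1$ and the identification $\log\frac1{|z|}=g(w,a)$ are exactly right, and your sub-mean-value correction near the origin is the correct way to pass between the weights $\log\frac1{|z|}$ and $1-|z|^2$ with constants independent of $F$. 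The Carleson-box comparison is also sound: the test point $a_I=(1-|I|)\zeta_I$ with $1-|\sigma_{a_I}(z)|^2\thickapprox(1-|z|^2)/|I|$ on $S(I)$ gives $(i)\lesssim(ii)$, and the dyadic shell decomposition with the decay $1-|\sigma_a(z)|^2\lesssim 2^{-2j}(1-|z|^2)/(1-|a|)$ on the $j$-th shell produces the convergent series $\sum_j 2^{j(\lambda-2)}$ (indeed convergent since $\lambda\le 1<2$), giving $(ii)\lesssim(i)$. The only cosmetic omissions are the trivial case $a=0$ (where $a/|a|$ is undefined; take $I=\partial\mathbb D$) and the truncation of the shells once $2^j|I_a|$ exceeds the length of $\partial\mathbb D$, neither of which affects the argument.
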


\begin{definition} {\rm (\cite{YL})
Given $0<\lambda< 2$ and $p>0$, a function $f$ in $Hol(\mathbb{D})$ is said to belong to Bergman-Morrey space $A^{p,\lambda}$ if
	$$\|f\|_{A^{p,\lambda}}:=\left(|f(0)|^p+	\sup\limits_{a\in\mathbb{D}} (1-|a|^{2})^{2-\lambda}\int_{\mathbb{D}}|f'(z)|^{p}(1-|z|^{2})^{p}|\sigma'_{a}(z)|^{2}dA(z)\right)^{\frac{1}{p}}<\infty.$$}
\end{definition}

\begin{lemma} {\rm (\cite[Remark 2.2]{YL})}
Let  $0<\lambda<2$  and  $p>0 ,$ then  $f \in Hol(\mathbb{D})$  has the following equivalent norms in  $A^{p, \lambda} .$
\begin{align*}
\|f\|_{A^{p, \lambda}} & \approx|f(0)|+\sup _{a \in \mathbb{D}}\left(\left(1-|a|^{2}\right)^{2-\lambda} \int_{\mathbb{D}}\left|f^{\prime}(z)\right|^{p}\left(1-|z|^{2}\right)^{p}\left|\sigma_{a}^{\prime}(z)\right|^{2} d A(z)\right)^{\frac{1}{p}} \\
& \approx|f(0)|+\sup _{I \subset \partial \mathbb{D}}\left(\frac{1}{|I|^{\lambda}} \int_{S(I)}\left|f^{\prime}(z)\right|^{p}\left(1-|z|^{2}\right)^{p} d A(z)\right)^{\frac{1}{p}} \\
& \approx|f(0)|+\sup _{a \in \mathbb{D}}\left(\int_{\mathbb{D}}\left|f^{\prime}(z)\right|^{p}\left(1-|z|^{2}\right)^{p-\lambda}\left(1-\left|\sigma_{a}(z)\right|^{2}\right)^{\lambda} d A(z)\right)^{\frac{1}{p}}.
\end{align*}
\end{lemma}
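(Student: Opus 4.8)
The plan is to recognize all three right-hand quantities as equivalent encodings of the single statement that the positive measure $d\nu_f(z):=|f'(z)|^{p}(1-|z|^{2})^{p}\,dA(z)$ is a bounded $\lambda$-Carleson measure, i.e.\ $\nu_f(S(I))\lesssim|I|^{\lambda}$ uniformly over arcs $I\subset\partial\mathbb D$ (here $S(I)$ is the usual Carleson box over $I$). The first comparison, between $\|f\|_{A^{p,\lambda}}$ and $|f(0)|+\sup_{a}\big((1-|a|^{2})^{2-\lambda}\int_{\mathbb D}|f'|^{p}(1-|z|^{2})^{p}|\sigma_a'|^{2}\,dA\big)^{1/p}$, is nothing but the elementary inequality $(x^{p}+y^{p})^{1/p}\approx x+y$ for $x,y\ge 0$ applied with $x=|f(0)|$ and $y$ the supremum; it carries no analytic content and holds for every $p>0$. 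So it remains to compare the three integral quantities, which I abbreviate $N_1,N_2,N_3$ in the order listed, dropping the common summand $|f(0)|$.

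The key step is to put $N_1$ and $N_3$ into a common normal form via the Möbius identities $|\sigma_a'(z)|^{2}=(1-|a|^{2})^{2}/|1-\bar a z|^{4}$ and $1-|\sigma_a(z)|^{2}=(1-|a|^{2})(1-|z|^{2})/|1-\bar a z|^{2}$, which give
\[
N_1^{p}=\sup_{a\in\mathbb D}\int_{\mathbb D}\frac{(1-|a|^{2})^{4-\lambda}}{|1-\bar a z|^{4}}\,d\nu_f(z),
\qquad
N_3^{p}=\sup_{a\in\mathbb D}\int_{\mathbb D}\frac{(1-|a|^{2})^{\lambda}}{|1-\bar a z|^{2\lambda}}\,d\nu_f(z),
\]
both of the shape $\sup_{a}\int_{\mathbb D}(1-|a|^{2})^{c}|1-\bar a z|^{-(\lambda+c)}\,d\nu_f$, with $c=4-\lambda$ for $N_1$ (and $c>0$ since $\lambda<4$, which is guaranteed by $\lambda<2$) and $c=\lambda>0$ for $N_3$. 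Invoking the classical equivalence — for a positive measure $\mu$ on $\mathbb D$ and any $c>0$, $\mu(S(I))\lesssim|I|^{\lambda}$ uniformly if and only if $\sup_{a}\int_{\mathbb D}(1-|a|^{2})^{c}|1-\bar a z|^{-(\lambda+c)}\,d\mu<\infty$, with comparable constants — each of $N_1^{p}$ and $N_3^{p}$ is comparable to $\sup_{I}|I|^{-\lambda}\nu_f(S(I))=N_2^{p}$, and chaining the comparisons closes the argument. If a self-contained proof is wanted I would instead establish $N_j\lesssim N_2$ ($j=1,3$) directly: fix $a$, split $\mathbb D$ into the Carleson box $S(I_a)$ with $|I_a|\approx 1-|a|$ centered at $a/|a|$ together with the dyadic shells $R_n=S(2^{n}I_a)\setminus S(2^{n-1}I_a)$, use $|1-\bar a z|\approx 2^{n}(1-|a|)$ and $\nu_f(R_n)\le\nu_f(S(2^{n}I_a))\lesssim(2^{n}(1-|a|))^{\lambda}N_2^{p}$ on $R_n$, and sum the resulting series, whose $n$th term is $\lesssim 2^{-n(4-\lambda)}$ for $N_1$ and $\lesssim 2^{-n\lambda}$ for $N_3$ — both summable throughout $0<\lambda<2$. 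The reverse bounds $N_2\lesssim N_j$ follow by restricting each supremum's integral to $S(I_a)$ (every arc arising this way), where the weight is $\gtrsim(1-|a|)^{-\lambda}$.

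The Möbius identities and the geometric bookkeeping are routine; the one place that needs genuine care is keeping every implied constant independent of $f$ and, in the shell argument, of $a$ — in particular treating the degenerate regime $|a|\to 0$ (where $S(I_a)$ already fills a fixed proportion of $\mathbb D$ and only boundedly many shells survive) and checking that the exponent conditions $c>0$ in the Carleson-measure lemma hold throughout $0<\lambda<2$. I also note that the argument never uses the triangle inequality for $f$, only the positivity of $\nu_f$ and of the weights, which is why the conclusion remains valid for all $p>0$ rather than merely $p\ge 1$.
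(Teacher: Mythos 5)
The paper does not prove this lemma at all: it is imported verbatim from \cite[Remark 2.2]{YL}, so there is no internal proof to compare against. Your argument is correct and is the standard one — after the harmless $(x^p+y^p)^{1/p}\approx x+y$ reduction, you recognize all three integral quantities as equivalent formulations of the statement that $|f'(z)|^p(1-|z|^2)^p\,dA(z)$ is a bounded $\lambda$-Carleson measure, using the classical characterization $\sup_a\int_{\mathbb D}(1-|a|^2)^c|1-\bar a z|^{-(\lambda+c)}\,d\mu<\infty$ with the free exponent $c>0$ (here $c=4-\lambda$ and $c=\lambda$, both positive throughout $0<\lambda<2$), which is precisely the mechanism behind the cited reference and behind the identification $A^{p,\lambda}=F(p,p-\lambda,\lambda)$ that the paper itself invokes in its corollaries; your dyadic-shell verification and the observation that only positivity of the measure (not analyticity of $f$) is used are both sound.
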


Throughout this paper, we write $z=b e^{i\theta}$ with $b=|z|\in[0,1)$.
For a (sufficiently smooth) complex-valued harmonic function $f$ on $\mathbb D$,
we denote by $f_b=\partial f/\partial b$ and $f_\theta=\partial f/\partial \theta$
the radial and angular derivatives, respectively, and we use the product $b f_b=b\,\partial f/\partial b$.
This notation is chosen to avoid confusion with the exponent parameter $r$ appearing in spaces such as $M(r,q,s)$ and $F(r,q,s)$.
Moreover, in terms of Wirtinger derivatives, one has the identities
\[
-i f_{\theta}=z f_{z}-\overline{z}\, f_{\bar z},\quad
b f_{b}=z f_{z}+\overline{z}\, f_{\bar z}.
\]
In particular, if $f=h+\overline g$ with $h,g\in H(\mathbb D)$, then $f_z=h'$ and $f_{\bar z}=\overline{g'}$, and hence
\[
-i f_{\theta}=z h'(z)-\overline{z}\,\overline{g'(z)},\quad
b f_{b}=z h'(z)+\overline{z}\,\overline{g'(z)}.
\]

\section{Growth of conjugate-type functions for harmonic quasiregular mappings}\label{section2}

In this section, we establish conjugate-type stability for harmonic quasiregular mappings in several families of function spaces.
Our first goal is to show that, under quasiregularity, membership of the real part in a given scale forces the same membership for the imaginary part, with quantitative control.

\begin{theorem}\label{v<u}
Let $\alpha>-1$ and $\alpha+1<p<\alpha+2$.
Suppose that $f=u+iv$ is a harmonic $K$-quasiregular mapping in $\mathbb D$.
Assume that $u\in Q_h(1,p,\alpha)$,
then $v\in Q_h(1,p,\alpha)$ and
$$\left \|v  \right \| _{Q_h(1,p,\alpha)}\le K\left \|u  \right \| _{Q_h(1,p,\alpha)}.$$
\end{theorem}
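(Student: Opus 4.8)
The plan is to exploit the pointwise relation between the gradients of $u$, $v$, and the full harmonic mapping $f$ that quasiregularity forces, and then integrate. Write $f=h+\overline g$ with analytic dilatation $w=g'/h'$ satisfying $|w|\le k=(K-1)/(K+1)$. Since $u=\operatorname{Re}f=\tfrac12(h+\overline g)+\tfrac12\overline{(h+\overline g)}$ and similarly for $v$, the Wirtinger derivatives of $u$ and $v$ are $u_z=\tfrac12(h'+\overline{g'})$-type expressions; more precisely $u_z=\tfrac12(f_z+\overline{f_{\bar z}})=\tfrac12(h'+\overline{g'})$ and $v_z=\tfrac1{2i}(f_z-\overline{f_{\bar z}})=\tfrac1{2i}(h'-\overline{g'})$. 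Hence $\Lambda_u(z)=2|u_z(z)|=|h'(z)+\overline{g'(z)}|$ and $\Lambda_v(z)=2|v_z(z)|=|h'(z)-\overline{g'(z)}|$ (using that $u,v$ are real-valued, so $|u_{\bar z}|=|u_z|$, etc.). The key elementary inequality is then
\[
\Lambda_v(z)=|h'(z)-\overline{g'(z)}|\le |h'(z)|+|g'(z)|=|h'(z)|\,(1+|w(z)|)\le \frac{1+|w(z)|}{1-|w(z)|}\,\bigl||h'(z)|-|g'(z)|\bigr|,
\]
while also $\bigl||h'|-|g'|\bigr|\le |h'-\overline{g'}| = \Lambda_v$ — wait, that is the wrong direction, so instead I compare both $\Lambda_u$ and $\Lambda_v$ to $|h'|$: one has $|h'|(1-|w|)\le \Lambda_u\le |h'|(1+|w|)$ and likewise for $\Lambda_v$, whence
\[
\Lambda_v(z)\le (1+|w(z)|)\,|h'(z)|\le \frac{1+|w(z)|}{1-|w(z)|}\,\Lambda_u(z)\le \frac{1+k}{1-k}\,\Lambda_u(z)=K\,\Lambda_u(z).
\]

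Once this pointwise bound $\Lambda_v(z)\le K\,\Lambda_u(z)$ is in hand, the theorem follows by Möbius invariance of the relevant integrals. For any $a\in\mathbb D$, the composition $f\circ\sigma_a$ is again harmonic and $K$-quasiregular (quasiregularity is conformally invariant, and $\sigma_a$ is conformal), and $\operatorname{Re}(f\circ\sigma_a)=u\circ\sigma_a$, $\operatorname{Im}(f\circ\sigma_a)=v\circ\sigma_a$. Applying the pointwise inequality to $f\circ\sigma_a$ gives $\Lambda_{v\circ\sigma_a}(z)\le K\,\Lambda_{u\circ\sigma_a}(z)$ for every $z\in\mathbb D$. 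Therefore, using the first (exact) form of the $Q_h(1,p,\alpha)$ semi-norm from Definition~\ref{def:Qh-npa},
\[
\int_{\mathbb D}\Lambda_{v\circ\sigma_a}(z)^{p}(1-|z|^2)^{\alpha}\,dA(z)\le K^{p}\int_{\mathbb D}\Lambda_{u\circ\sigma_a}(z)^{p}(1-|z|^2)^{\alpha}\,dA(z),
\]
and taking the supremum over $a\in\mathbb D$ yields $\|v\|_{Q_h(1,p,\alpha)}^{p}\le K^{p}\|u\|_{Q_h(1,p,\alpha)}^{p}$, i.e. $\|v\|_{Q_h(1,p,\alpha)}\le K\|u\|_{Q_h(1,p,\alpha)}$. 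In particular the hypothesis $u\in Q_h(1,p,\alpha)$ forces the right-hand side finite, so $v\in Q_h(1,p,\alpha)$.

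I expect the only genuinely delicate point to be the bookkeeping in the first step: one must be careful that $\Lambda$ here is applied to the \emph{complex-valued harmonic} functions $u$ and $v$ (viewed as mappings into $\mathbb C$ with zero imaginary part), so that $\Lambda_u=|u_z|+|u_{\bar z}|=2|u_z|$ since $u$ real gives $u_{\bar z}=\overline{u_z}$, and likewise $\Lambda_v=2|v_z|$; and that the $Q_h$-seminorm of a real-valued harmonic function, whose canonical decomposition is $u=h_u+\overline{g_u}$ with $g_u'=-\overline{\,\overline{h_u'}\,}$... in fact for $u$ real one may take $h_u=\tfrac12 f+c$, $g_u=\tfrac12\overline{f}$ in a formal sense — to avoid this one simply uses the $\Lambda$-form of the seminorm throughout, which is exactly why Definition~\ref{def:Qh-npa} records the identity $\|f\|_{Q_h(1,p,\alpha)}^p=\sup_a\int \Lambda_{f\circ\sigma_a}^p (1-|z|^2)^\alpha\,dA$. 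The parameter restrictions $\alpha>-1$ and $\alpha+1<p<\alpha+2$ play no role in the argument itself; they only guarantee the space is nontrivial and nonempty, so I would mention that the proof is valid verbatim whenever the space is defined. Everything else is routine, and no deep machinery (Carleson measures, the lemmas on $Q_s$ or Morrey norms) is needed for this particular statement — those enter only for the corollaries that follow.
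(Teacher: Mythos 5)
Your proof is correct and follows essentially the same route as the paper's: both reduce to the pointwise bound $|G'(z)|\le K|F'(z)|$ for the analytic completions $F=h+g$ of $u$ and $G=h-g$ of $v$ (via $F'=h'(1+w)$, $G'=h'(1-w)$, $|w|\le k$), and then integrate using M\"obius invariance of the seminorm. The only blemish is a harmless conjugation slip: substituting $f_{\bar z}=\overline{g'}$ into $u_z=\tfrac12\bigl(f_z+\overline{f_{\bar z}}\bigr)$ gives $\tfrac12(h'+g')$ rather than $\tfrac12\bigl(h'+\overline{g'}\bigr)$, but since you only ever use $\bigl||h'|-|g'|\bigr|\le|h'\pm g'|\le|h'|+|g'|$, nothing downstream is affected.
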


\begin{proof}
Without loss of generality, assume that $f(0)=0$. Write $f=h+\overline{g}$, and for each $a\in\mathbb D$, define
$$f_{\sigma_a}:=f\circ\sigma_a=h_{\sigma_a}+\overline{g_{\sigma_a}},$$ where $h_{\sigma_a}=h\circ\sigma_a$ and $g_{\sigma_a}=g\circ\sigma_a$.

Since $u\in Q_h(1,p,\alpha)$, it follows that for every $a\in\mathbb D$,
$$
\int_{\mathbb{D}}\bigl|\nabla(u\circ\sigma_a)(z)\bigr|^{p}(1-|z|^{2})^{\alpha}\,dA(z)<\infty.
$$

Set $F:=h+g$ and $G:=h-g$. Then
$$u=\Re(f)=\Re(h+\overline{g})=\Re(h+g)=\Re(F)$$
and
$$v=\Im(f)=\Im(h+\overline{g})=\Im(h-g)=\Im(G).$$
Consequently,
$u\circ\sigma_a=\Re(F\circ\sigma_a)$ and $v\circ\sigma_a=\Im(G\circ\sigma_a)$, and hence
$$
\bigl|(F\circ\sigma_a)'(z)\bigr|=\bigl|\nabla(u\circ\sigma_a)(z)\bigr|,
\quad
\bigl|(G\circ\sigma_a)'(z)\bigr|=\bigl|\nabla(v\circ\sigma_a)(z)\bigr|.
$$

Let $w={g'}/{h'}$. Since $f$ is $K$-quasiregular, we have $|w|\le k<1$, where $k$ is given by \eqref{kandK}.
Moreover,
$$
F'=h'+g'=h'(1+w),
\quad
G'=h'-g'=h'(1-w).
$$
Using the elementary inequalities $$|1-w|\le 1+|w|$$ and $$|1+w|\ge 1-|w|\ge 1-k,$$ we obtain
$$
|G'(z)|\le |h'(z)|(1+k),
\quad
|F'(z)|\ge |h'(z)|(1-k),
$$
and therefore
$$
|G'(z)|\le \frac{1+k}{1-k}\,|F'(z)|=K|F'(z)|
\quad (z\in\mathbb D).
$$

For any $a\in\mathbb D$, by the chain rule, we have
$$
\bigl|(G\circ\sigma_a)'(z)\bigr|
=\bigl|G'(\sigma_a(z))\bigr|\,\bigl|\sigma_a'(z)\bigr|
\le K\bigl|F'(\sigma_a(z))\bigr|\,\bigl|\sigma_a'(z)\bigr|
=K\bigl|(F\circ\sigma_a)'(z)\bigr|.
$$
Raising to the power $p$, multiplying by $(1-|z|^{2})^{\alpha}$, integrating over $\mathbb D$, and taking the supremum over $a\in\mathbb D$, we obtain
$$
\sup_{a\in\mathbb{D}}\int_{\mathbb{D}}\bigl|(G\circ\sigma_a)'(z)\bigr|^{p}(1-|z|^{2})^{\alpha}\,dA(z)
\le K^{p}\sup_{a\in\mathbb{D}}\int_{\mathbb{D}}\bigl|(F\circ\sigma_a)'(z)\bigr|^{p}(1-|z|^{2})^{\alpha}\,dA(z).
$$
Using $$\bigl|(F\circ\sigma_a)'(z)\bigr|=\bigl|\nabla(u\circ\sigma_a)(z)\bigr|$$ and $$\bigl|(G\circ\sigma_a)'(z)\bigr|=\bigl|\nabla(v\circ\sigma_a)(z)\bigr|,$$ this shows that $v\in Q_h(1,p,\alpha)$ and yields the desired estimate.
\end{proof}

The next theorem shows that the same principle persists in the three-parameter family $F_h(p,q,s)$, which unifies several classical spaces as its special cases.

\begin{theorem}\label{v<u-F}
Let $0< p<\infty$, $-2<q<\infty$ and $0<s<\infty$ satisfy $q+s>-1$.
Suppose that $f=u+iv$ is a harmonic $K$-quasiregular mapping in $\mathbb D$.
Assume that $u\in F_h(p,q,s)$,
then $v\in F_h(p,q,s)$ and
$$\left \|v  \right \| _{F_h(p,q,s)}\le K\left \|u  \right \| _{F_h(p,q,s)}.$$
\end{theorem}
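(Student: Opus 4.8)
The plan is to follow the exact same strategy as in the proof of Theorem~\ref{v<u}, replacing the weight $(1-|z|^2)^\alpha$ by $(1-|z|^2)^q g(z,a)^s$ and exploiting the fact that the $F_h$ semi-norm is defined directly in terms of $\Lambda_f$ (equivalently $|\nabla f|$) rather than in terms of the M\"obius-composed derivative. First I would assume $f(0)=0$ and write $f=h+\overline g$ with $g(0)=0$. As before, I set $F:=h+g$ and $G:=h-g$, so that $u=\Re(f)=\Re(F)$ and $v=\Im(f)=\Im(G)$; since $F$ and $G$ are analytic, this gives the pointwise identities $|\nabla u(z)|=|F'(z)|$ and $|\nabla v(z)|=|G'(z)|$.

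Next I would reproduce the pointwise comparison of derivatives. With $w=g'/h'$ and $|w|\le k<1$, $k=(K-1)/(K+1)$, one has $F'=h'(1+w)$ and $G'=h'(1-w)$, hence $|G'(z)|\le\frac{1+k}{1-k}|F'(z)|=K|F'(z)|$ for all $z\in\mathbb D$. Translating back, $|\nabla v(z)|\le K|\nabla u(z)|$ pointwise on $\mathbb D$, and therefore $\Lambda_v(z)\le|\nabla v(z)|\le K|\nabla u(z)|\le\sqrt2\,K\,\Lambda_u(z)$. The key point that makes the clean constant $K$ work (rather than $\sqrt2\,K$) is that $|\nabla v|\le K|\nabla u|$ holds at the level of the real gradient, so I would carry the estimate through in the $|\nabla\cdot|$ formulation of the $F_h$ semi-norm: raise $|\nabla v(z)|\le K|\nabla u(z)|$ to the power $p$, multiply by $(1-|z|^2)^q g(z,a)^s$, integrate over $\mathbb D$, and take the supremum over $a\in\mathbb D$. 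This yields $\|v\|_{F_h(p,q,s)}^p\le K^p\|u\|_{F_h(p,q,s)}^p$, i.e.\ $\|v\|_{F_h(p,q,s)}\le K\|u\|_{F_h(p,q,s)}$, and in particular $v\in F_h(p,q,s)$.

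I do not anticipate a serious obstacle here: unlike Theorem~\ref{v<u}, where one needs the chain rule $|(G\circ\sigma_a)'|=|G'\circ\sigma_a|\,|\sigma_a'|$ to transfer the bound under the M\"obius composition, the $F_h$ semi-norm already builds the M\"obius action into the weight $g(z,a)^s$, so the pointwise inequality $|\nabla v|\le K|\nabla u|$ can be integrated directly against the fixed weight for each $a$. The only minor care needed is to make sure one uses the $|\nabla\cdot|$-version of the $F_h$ semi-norm on both sides (the paper records that $\Lambda_f$ and $|\nabla f|$ give equivalent semi-norms, and moreover $|\nabla(\Re F)|=|F'|$ exactly for analytic $F$), so that no stray $\sqrt2$ factor appears and the constant is exactly $K$. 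One should also note that the hypotheses $-2<q<\infty$, $0<s<\infty$, $q+s>-1$ are used only to ensure $F_h(p,q,s)$ is a well-defined space (and its semi-norm is equivalent to the $(1-|\sigma_a|^2)^s$ version); they play no role in the inequality itself.
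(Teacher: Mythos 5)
Your proposal is correct and coincides with the paper's own argument: the same decomposition $F=h+g$, $G=h-g$, the same pointwise bound $|G'|\le K|F'|$ inherited from Theorem~\ref{v<u}, and direct integration against the fixed weight (no M\"obius composition needed). Your side remark about avoiding a stray $\sqrt2$ is moot but harmless, since for a real-valued harmonic function $u=\Re F$ one has $\Lambda_u=|\nabla u|=|F'|$ exactly.
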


\begin{proof}
Without loss of generality, assume that $f(0)=0$. Write $f=h+\overline{g}$ and set
$$F:=h+g,\quad G:=h-g.$$

Since $u\in F_h(p,q,s)$, it follows that for every $a\in\mathbb D$,
$$
\int_{\mathbb{D}}|\nabla u(z)|^{p}(1-|z|^{2})^{q}\bigl(1-|\sigma_a(z)|^{2}\bigr)^{s}\,dA(z)<\infty.
$$
Moreover,
$$
|F'(z)|=\bigl|\nabla u(z)\bigr|,
\quad
|G'(z)|=\bigl|\nabla v(z)\bigr|.
$$
By the same argument as in the proof of Theorem~\ref{v<u}, we have
$$
|G'(z)|\le K|F'(z)|\quad (z\in\mathbb D).
$$
Therefore, for every $a\in\mathbb D$,
$$
\int_{\mathbb{D}}|G'(z)|^{p}(1-|z|^{2})^{q}\bigl(1-|\sigma_a(z)|^{2}\bigr)^{s}\,dA(z)
\le K^{p}\int_{\mathbb{D}}|F'(z)|^{p}(1-|z|^{2})^{q}\bigl(1-|\sigma_a(z)|^{2}\bigr)^{s}\,dA(z).
$$
Taking the supremum over $a\in\mathbb D$, we obtain
$$
\sup_{a\in\mathbb{D}}\int_{\mathbb{D}}|G'(z)|^{p}(1-|z|^{2})^{q}\bigl(1-|\sigma_a(z)|^{2}\bigr)^{s}\,dA(z)
\le K^{p}\sup_{a\in\mathbb{D}}\int_{\mathbb{D}}|F'(z)|^{p}(1-|z|^{2})^{q}\bigl(1-|\sigma_a(z)|^{2}\bigr)^{s}\,dA(z).
$$
Using $|F'(z)|=\bigl|\nabla u(z)\bigr|$ and $|G'(z)|=\bigl|\nabla v(z)\bigr|$, this shows that $v\in F_h(p,q,s)$ and yields the desired assertion.
\end{proof}

For each analytic space $X$ considered below whose (quasi-)norm can be written in terms of an area integral
involving $|f|$ (or equivalently $|f|^p$), we denote by $X_h$ its \emph{harmonic counterpart} obtained by keeping
the same defining formula but dropping analyticity and requiring only that $f$ be harmonic in $\mathbb D$.
In particular, the harmonic Morrey space $\mathcal L_h^{2,\lambda}$, the harmonic Bergman--Morrey space
$A_h^{p,\lambda}$, and the harmonic $Q_s$-space $Q_{s,h}$ are understood in this sense (cf.\ the definition of
$F_h(p,q,s)$), and we do not repeat their definitions.

By specializing the parameters $(p,q,s)$ to the standard identifications of Morrey-type spaces, Bergman-Morrey-type spaces, and $Q_s$-type spaces, we obtain the following immediate consequences.

\begin{coro}\label{coro:morrey-K}
Let $0<\lambda<1$. Suppose that $f=u+iv$ is a harmonic $K$-quasiregular mapping in $\mathbb D$.
Assume that $u\in \mathcal{L}_{h}^{2,\lambda}$,
then $v\in \mathcal{L}_{h}^{2,\lambda}$ and
$\left \|v  \right \| _{\mathcal{L}_{h}^{2,\lambda}}\le K\left \|u  \right \| _{\mathcal{L}_{h}^{2,\lambda}}.$
\smallskip
\end{coro}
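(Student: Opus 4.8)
The plan is to read off this corollary from Theorem~\ref{v<u-F} by identifying the Morrey scale with a single fibre of the $F$-scale. First I would record the parameter dictionary: comparing the Carleson-box description of $F(p,q,s)$ (item~(iii) of the lemma immediately following its definition, where $d\mu_f=|f'|^{p}(1-|z|^2)^{q+s}\,dA$ is an $s$-Carleson measure) with the Carleson-box description of $\mathcal L^{2,\lambda}$ in Lemma~\ref{equivalent conditions}(i), one sees that for $0<\lambda<1$ both conditions read $\sup_{I\subset\partial\mathbb D}|I|^{-\lambda}\int_{S(I)}|f'(z)|^{2}(1-|z|^{2})\,dA(z)<\infty$, so that $\mathcal L^{2,\lambda}$ coincides with $F(2,\,1-\lambda,\,\lambda)$ with comparable (semi-)norms, and likewise $\mathcal L_h^{2,\lambda}$ coincides with $F_h(2,\,1-\lambda,\,\lambda)$ once $|f'|$ is replaced by $\Lambda_f$ (equivalently $|\nabla f|$). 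Next I would check that the triple $(p,q,s)=(2,1-\lambda,\lambda)$ is admissible in Theorem~\ref{v<u-F}: indeed $0<p=2<\infty$, $0<s=\lambda<1<\infty$, $-2<q=1-\lambda<\infty$, and $q+s=1>-1$. Applying Theorem~\ref{v<u-F} to this triple gives $v\in F_h(2,1-\lambda,\lambda)=\mathcal L_h^{2,\lambda}$, which settles the membership assertion.

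It then remains to produce the \emph{sharp} constant $K$, and for this I would not invoke the norm equivalence $\|\cdot\|_{\mathcal L^{2,\lambda}}\approx\|\cdot\|_{2,\,1-\lambda,\,\lambda}$ (whose implied constants depend on $\lambda$), but instead re-run the one-line mechanism from the proof of Theorem~\ref{v<u-F} directly against the Morrey weight. After the harmless normalization $f(0)=0$, write $f=h+\overline g$ and set $F=h+g$, $G=h-g$, so that $u=\Re F$ and $v=\Im G$; then $|\nabla u(z)|=|F'(z)|$ and $|\nabla v(z)|=|G'(z)|$, and these coincide with $\Lambda_u(z)$ and $\Lambda_v(z)$ respectively, since $u$ and $v$ are a real part and an imaginary part of an analytic function (so no factor $\sqrt2$ is lost). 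Meanwhile $K$-quasiregularity yields the pointwise bound $|\nabla v(z)|=|G'(z)|\le K\,|F'(z)|=K\,|\nabla u(z)|$ on $\mathbb D$, exactly as in the proof of Theorem~\ref{v<u-F}. Substituting this into whichever weighted-area form is adopted for $\|\cdot\|_{\mathcal L_h^{2,\lambda}}$ (for instance the harmonic analogue of Lemma~\ref{equivalent conditions}(ii), in which, for each fixed $a$, the factor $(1-|a|^{2})^{1-\lambda}$ simply pulls out of the integral), then taking the supremum over $a\in\mathbb D$ and a square root, gives $\|v\|_{\mathcal L_h^{2,\lambda}}\le K\|u\|_{\mathcal L_h^{2,\lambda}}$.

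The only point that needs care is precisely this last one. The passage between $\mathcal L_h^{2,\lambda}$ and $F_h(2,1-\lambda,\lambda)$ is only a norm \emph{equivalence}, with a $\lambda$-dependent constant, so quoting Theorem~\ref{v<u-F} verbatim would deliver a bound of the shape $C_\lambda K$ rather than $K$; the remedy, as above, is to keep the whole argument at the level of the pointwise gradient inequality $|\nabla v|\le K|\nabla u|$, which is weight-agnostic and hence transfers the constant $K$ with no loss. Everything else — the admissibility of the parameters and the elementary change of variables $z\mapsto\sigma_a(z)$ — is routine bookkeeping, already carried out in the proofs of Theorems~\ref{v<u} and \ref{v<u-F}.
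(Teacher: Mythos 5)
Your proof is correct and follows essentially the same route as the paper, which records this corollary simply as the specialization of Theorem~\ref{v<u-F} to the identification $\mathcal{L}^{2,\lambda}=F(2,1-\lambda,\lambda)$ with the admissible triple $(p,q,s)=(2,1-\lambda,\lambda)$. Your additional observation --- that the norm equivalence alone would only deliver a bound of the form $C_\lambda K$, and that the clean constant $K$ is recovered by applying the weight-agnostic pointwise inequality $|\nabla v|\le K|\nabla u|$ directly to the Morrey-type area integral --- is a point the paper passes over in silence, and it is exactly the right way to justify the stated inequality $\|v\|_{\mathcal{L}_h^{2,\lambda}}\le K\|u\|_{\mathcal{L}_h^{2,\lambda}}$.
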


\begin{remark} {\rm This is the specialization of Theorem~\ref{v<u-F} to the case
$\mathcal{L}^{2,\lambda}=F(2,1-\lambda,\lambda)$ (with equivalent norms), see e.g., \cite{LL,WJ,ZR,ZR1}.}
\end{remark}

\begin{coro}\label{coro:bergman-morrey-K}
Let $0<\lambda<2$ and $0<p<\infty$. Suppose that $f=u+iv$ is a harmonic $K$-quasiregular mapping in $\mathbb D$.
Assume that $u\in A_{h}^{p,\lambda}$,
then $v\in A_{h}^{p,\lambda}$ and
$\left \|v  \right \| _{A_{h}^{p,\lambda}}\le K\left \|u  \right \| _{A_{h}^{p,\lambda}}.$
\smallskip
\end{coro}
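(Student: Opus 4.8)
The plan is to obtain this as an immediate specialization of Theorem~\ref{v<u-F}, in exactly the same spirit as Corollary~\ref{coro:morrey-K}. The starting point is the third equivalent description of the Bergman--Morrey norm recorded after the definition of $A^{p,\lambda}$: by \cite[Remark~2.2]{YL}, an analytic $f$ belongs to $A^{p,\lambda}$ precisely when
\[
\sup_{a\in\mathbb D}\int_{\mathbb D}|f'(z)|^{p}\,(1-|z|^2)^{p-\lambda}\,(1-|\sigma_a(z)|^2)^{\lambda}\,dA(z)<\infty,
\]
with the two sides comparable; combined with the equivalent $(1-|\sigma_a(z)|^2)^s$ form of the $F(p,q,s)$-seminorm recalled in Section~\ref{section02}, this says $A^{p,\lambda}=F(p,\,p-\lambda,\,\lambda)$ with equivalent norms. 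Passing to the harmonic counterparts amounts to replacing $|f'|$ by $\Lambda_f$ throughout (cf.\ Definition~\ref{def:Fh-pqs}), and this substitution is applied to both defining expressions simultaneously, so the identification survives: $A_h^{p,\lambda}=F_h(p,\,p-\lambda,\,\lambda)$, and if one normalizes $A_h^{p,\lambda}$ by the associated $F_h$-seminorm these agree without any loss of constant.

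First I would verify that the triple $(p,q,s)=(p,\,p-\lambda,\,\lambda)$ is admissible in Theorem~\ref{v<u-F}, i.e.\ $0<p<\infty$, $-2<q<\infty$, $0<s<\infty$ and $q+s>-1$. All four are automatic from $0<\lambda<2$ and $p>0$: indeed $s=\lambda\in(0,2)$; $q=p-\lambda>p-2>-2$; and $q+s=p>0>-1$. Theorem~\ref{v<u-F} then applies directly: for a harmonic $K$-quasiregular $f=u+iv$ with $u\in F_h(p,p-\lambda,\lambda)$ one gets $v\in F_h(p,p-\lambda,\lambda)$ together with $\|v\|_{F_h(p,p-\lambda,\lambda)}\le K\|u\|_{F_h(p,p-\lambda,\lambda)}$. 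Re-reading this through the identification of the first paragraph delivers $v\in A_h^{p,\lambda}$ with $\|v\|_{A_h^{p,\lambda}}\le K\|u\|_{A_h^{p,\lambda}}$, as claimed.

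I do not expect a genuine obstacle here: the content is entirely carried by Theorem~\ref{v<u-F}, whose proof rests on the pointwise inequality $|G'|\le K|F'|$ for $F=h+g$ and $G=h-g$. The only step needing a word of care is the constant. In its original form the Bergman--Morrey norm includes a $|f(0)|^{p}$ summand and differs from the $F_h$-seminorm by absolute multiplicative constants, so the sharp constant $K$ in the statement is obtained either by working with the $F_h$-compatible seminorm of $A_h^{p,\lambda}$ (the reading adopted above) or by the normalization $f(0)=0$ used in the proof of Theorem~\ref{v<u-F}, under which $u(0)=v(0)=0$ and the point-evaluation terms vanish identically; with a different admissible representative of the norm one would only obtain a constant of the form $CK$. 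No computation beyond the parameter check above is required.
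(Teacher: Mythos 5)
Your proposal is correct and follows exactly the route the paper takes: the paper's entire justification is the remark that this is the specialization of Theorem~\ref{v<u-F} via the identification $A^{p,\lambda}=F(p,\,p-\lambda,\,\lambda)$ with equivalent norms, and your parameter check and discussion of the constant fill in precisely the details that remark leaves implicit.
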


\begin{remark} {\rm This is the specialization of Theorem~\ref{v<u-F} to the case
$A^{p,\lambda}=F(p,p-\lambda,\lambda)$ (with equivalent norms), see e.g., \cite{YL,ZR,ZR1}.}
\end{remark}

\begin{coro}\label{coro:Qs-K}
Let $s>0$. Suppose that $f=u+iv$ is a harmonic $K$-quasiregular mapping in $\mathbb D$.
Assume that $u\in Q_{s,h}$,
then $v\in Q_{s,h}$ and
$\left \|v  \right \| _{Q_{s,h}}\le K\left \|u  \right \| _{Q_{s,h}}.$
\smallskip
\end{coro}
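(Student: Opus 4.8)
The plan is to reduce Corollary~\ref{coro:Qs-K} to Theorem~\ref{v<u-F} by identifying $Q_{s,h}$ with a member of the $F_h$-scale. In the analytic setting the classical identity $Q_s=F(2,0,s)$ holds with equivalent norms: the defining weight of $Q_s$, namely $(1-|\sigma_a(z)|^2)^s$, is precisely $(1-|z|^2)^q(1-|\sigma_a(z)|^2)^s$ with $q=0$, and the logarithmic Green-function weight $g(z,a)^s$ may be interchanged with $(1-|\sigma_a(z)|^2)^s$ (this interchangeability is already recorded in Definition~\ref{def:Fh-pqs} and in the Carleson-measure lemma for $F(p,q,s)$, whose hypothesis $q+s>-1$ reads $s>-1$ here and is satisfied). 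Passing to harmonic counterparts in the sense fixed just before the corollaries, $Q_{s,h}$ is then the same space as $F_h(2,0,s)$, carrying an equivalent norm (and, for the seminorm piece, the identical defining integral).

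First I would verify admissibility of the parameter triple $(p,q,s)=(2,0,s)$ for Theorem~\ref{v<u-F}: one needs $0<p<\infty$, $-2<q<\infty$, $0<s<\infty$ and $q+s>-1$; with $p=2$, $q=0$ and $s>0$ these all hold, since in particular $q+s=s>0>-1$. Then Theorem~\ref{v<u-F}, applied to the harmonic $K$-quasiregular mapping $f=u+iv$ with these parameters, yields $v\in F_h(2,0,s)=Q_{s,h}$ together with $\|v\|_{F_h(2,0,s)}\le K\|u\|_{F_h(2,0,s)}$, which is exactly the asserted inequality $\|v\|_{Q_{s,h}}\le K\|u\|_{Q_{s,h}}$.

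The only point worth a moment's care is that the conclusion carries the sharp constant $K$, not merely $CK$, even though the identification $Q_s=F(2,0,s)$ is only up to equivalent norms. This is not a genuine obstacle: the engine behind Theorem~\ref{v<u-F} is the pointwise bound $|G'(z)|\le K|F'(z)|$ for $F=h+g$ and $G=h-g$ (with $w=g'/h'$, $|w|\le k$), and this bound persists after multiplying by any nonnegative weight and integrating. Hence, to keep the sharp constant, I would simply rerun that one-line computation against the genuine $Q_{s,h}$ weight: after normalizing $f(0)=0$ (so $u(0)=v(0)=0$ and any additive boundary term drops out) and using $|\nabla u|=|F'|$, $|\nabla v|=|G'|$, one obtains $\int_{\mathbb D}|\nabla v(z)|^2(1-|\sigma_a(z)|^2)^s\,dA(z)\le K^2\int_{\mathbb D}|\nabla u(z)|^2(1-|\sigma_a(z)|^2)^s\,dA(z)$ for every $a\in\mathbb D$, and taking the supremum over $a$ followed by a square root finishes the argument. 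In short, the corollary is a direct specialization of Theorem~\ref{v<u-F}, and I expect no difficulty beyond the bookkeeping of the parameter identification.
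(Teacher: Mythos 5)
Your proposal matches the paper's own argument: the corollary is obtained exactly as the specialization of Theorem~\ref{v<u-F} to $Q_{s,h}=F_h(2,0,s)$, i.e.\ the parameter choice $(p,q,s)=(2,0,s)$, whose admissibility you correctly verify. Your additional care in rerunning the pointwise bound $|G'(z)|\le K|F'(z)|$ against the genuine $Q_{s,h}$ weight so as to retain the sharp constant $K$ (rather than the $CK$ that a bare appeal to norm equivalence would give) is a point the paper's one-line remark glosses over, and you handle it correctly.
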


\begin{remark}  {\rm This is the specialization of Theorem~\ref{v<u-F} to the case
$Q_s=F(2,0,s)$ (with equivalent norms), see e.g., \cite{XJ2,ZR,ZR1}.}
\end{remark}

{\rm The preceding results correspond to the case $K'=0$.
We now extend them to the harmonic $(K,K')$-quasiregular setting, where an additional inhomogeneous term leads to an explicit additive contribution in the norm estimates.}

\begin{theorem}\label{thm:Qh-CK21}
Let $\alpha>-1$ and $\alpha+1<p<\alpha+2$. Suppose that $f=u+iv$ is a harmonic $(K,K')$-quasiregular mapping in $\mathbb D$. Assume that $u\in Q_h(1,p,\alpha)$. Then $v\in Q_h(1,p,\alpha)$ and
\[
\|v\|_{Q_h(1,p,\alpha)}^{p}\le 2^{\max\{p-1,0\}}
\left(K^{p}\,\|u\|_{Q_h(1,p,\alpha)}^{p}+(K')^{p/2}\,C_{p,\alpha}\right),
\]
where
\[
C_{p,\alpha}:=\sup_{a\in\mathbb D}\int_{\mathbb D}|\sigma_a'(z)|^{p}(1-|z|^{2})^{\alpha}\,dA(z)<\infty.
\]
\end{theorem}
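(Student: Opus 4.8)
The plan is to run the argument of Theorem~\ref{v<u} essentially verbatim, the only substantive change being that the homogeneous pointwise dilatation bound $|G'|\le K|F'|$ gets replaced by an inhomogeneous one carrying a $\sqrt{K'}$ correction. Concretely, I would again normalize $f(0)=0$, write $f=h+\overline g$ with $h,g\in H(\mathbb{D})$, set $F:=h+g$ and $G:=h-g$, so that $u=\Re F$ and $v=\Im G$. Exactly as in Theorem~\ref{v<u}, for each $a\in\mathbb{D}$ the functions $F\circ\sigma_a$ and $G\circ\sigma_a$ are analytic, so the Cauchy--Riemann equations give $\Lambda_{u\circ\sigma_a}=|(F\circ\sigma_a)'|$ and $\Lambda_{v\circ\sigma_a}=|(G\circ\sigma_a)'|$, whence
\[
\|u\|_{Q_h(1,p,\alpha)}^{p}=\sup_{a\in\mathbb{D}}\int_{\mathbb{D}}|(F\circ\sigma_a)'(z)|^{p}(1-|z|^{2})^{\alpha}\,dA(z),
\]
and the same identity with $(v,G)$ replacing $(u,F)$.

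The new ingredient is the pointwise estimate, and I expect this to be the only genuinely new point. Since $f$ is harmonic it is smooth, so the a.e.\ inequality $\Lambda_f^{2}\le KJ_f+K'$ defining $(K,K')$-quasiregularity in fact holds at every point of $\mathbb{D}$; recalling $J_f=\Lambda_f\lambda_f$, this reads $\Lambda_f^{2}\le K\Lambda_f\lambda_f+K'$. From this one extracts
\[
\Lambda_f(z)\le K\lambda_f(z)+\sqrt{K'}\qquad(z\in\mathbb{D})
\]
by a one-line dichotomy: if $\Lambda_f(z)\le\sqrt{K'}$ the inequality is immediate because $\lambda_f\ge0$, while if $\Lambda_f(z)>\sqrt{K'}$ then dividing $\Lambda_f^{2}-K\Lambda_f\lambda_f\le K'$ by $\Lambda_f(z)$ gives $\Lambda_f-K\lambda_f\le K'/\Lambda_f<\sqrt{K'}$. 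Combining this with the elementary bounds $|G'|=|h'-g'|\le|h'|+|g'|=\Lambda_f$ and $\lambda_f=\bigl||h'|-|g'|\bigr|\le|h'+g'|=|F'|$ yields the inhomogeneous pointwise dilatation estimate $|G'(z)|\le K|F'(z)|+\sqrt{K'}$ for all $z\in\mathbb{D}$.

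To conclude, I would apply the chain rule to get $|(G\circ\sigma_a)'(z)|\le K|(F\circ\sigma_a)'(z)|+\sqrt{K'}\,|\sigma_a'(z)|$, raise both sides to the power $p$ via the elementary inequality $(x+y)^{p}\le 2^{\max\{p-1,0\}}(x^{p}+y^{p})$ valid for $x,y\ge0$ (convexity of $t\mapsto t^{p}$ when $p\ge1$, subadditivity when $0<p<1$), then multiply by $(1-|z|^{2})^{\alpha}$, integrate over $\mathbb{D}$, and take the supremum over $a$. Using the identities of the first paragraph and that the supremum of a sum is at most the sum of the suprema, this produces exactly the asserted estimate, with
\[
C_{p,\alpha}=\sup_{a\in\mathbb{D}}\int_{\mathbb{D}}|\sigma_a'(z)|^{p}(1-|z|^{2})^{\alpha}\,dA(z).
\]
Its finiteness I would verify by writing $|\sigma_a'(z)|=(1-|a|^{2})/|1-\overline a z|^{2}$, so that $C_{p,\alpha}=\sup_{a}(1-|a|^{2})^{p}\int_{\mathbb{D}}(1-|z|^{2})^{\alpha}|1-\overline a z|^{-2p}\,dA(z)$, and invoking the standard Forelli--Rudin asymptotics for this inner integral (bounded, logarithmic in $1/(1-|a|^{2})$, or comparable to $(1-|a|^{2})^{\alpha+2-2p}$, according as $2p<\alpha+2$, $2p=\alpha+2$, or $2p>\alpha+2$); in each of the three cases the hypotheses $\alpha>-1$ and $p<\alpha+2$ force the supremum to be finite.

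Apart from the pointwise inequality of the second paragraph, which is short, the argument is bookkeeping. The only place calling for a little care is the verification that $C_{p,\alpha}<\infty$, where one must track the exponent $2p$ against $\alpha+2$ in the Forelli--Rudin estimate across the whole admissible parameter range, together with the routine attention needed to handle the cases $0<p<1$ and $p\ge1$ uniformly through the factor $2^{\max\{p-1,0\}}$.
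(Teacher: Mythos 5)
Your proposal is correct and follows essentially the same route as the paper: the decomposition $F=h+g$, $G=h-g$, the pointwise bound $|G'|\le K|F'|+\sqrt{K'}$ derived from $\Lambda_f\le K\lambda_f+\sqrt{K'}$, the chain rule, and the elementary inequality $(x+y)^p\le 2^{\max\{p-1,0\}}(x^p+y^p)$. You in fact supply two details the paper leaves implicit — the dichotomy deriving $\Lambda_f\le K\lambda_f+\sqrt{K'}$ from $\Lambda_f^2\le KJ_f+K'$, and the Forelli--Rudin verification that $C_{p,\alpha}<\infty$ under $\alpha>-1$ and $p<\alpha+2$ — both of which check out.
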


\begin{proof}
Without loss of generality, assume that $f(0)=0$. Write $f=h+\overline g$ with $h,g\in H(\mathbb D)$ and $g(0)=0$. Set
\[
F_1:=h+g,\quad F_2:=h-g.
\]
Then $u=\Re(f)=\Re(F_1)$ and $v=\Im(f)=\Im(F_2)$. For each $a\in\mathbb D$, we have
\[
|(F_1\circ\sigma_a)'(z)|=|\nabla(u\circ\sigma_a)(z)|,\quad |(F_2\circ\sigma_a)'(z)|=|\nabla(v\circ\sigma_a)(z)|.
\]

Since $f$ is $(K,K')$-quasiregular, we have
\[
\Lambda_f(z)\le K\lambda_f(z)+\sqrt{K'}\quad (z\in\mathbb D).
\]
Noting that $f_z=h'$ and $f_{\bar z}=\overline{g'}$, we obtain
\[
|g'(z)|\le \mu_1 |h'(z)|+\mu_2,\quad 
\mu_1=\frac{K-1}{K+1},\quad \mu_2=\frac{\sqrt{K'}}{1+K}.
\]
Consequently,
\[
|F_2'(z)|\le \Lambda_f(z)\le (1+\mu_1)|h'(z)|+\mu_2,
\quad
|F_1'(z)|\ge \lambda_f(z)\ge (1-\mu_1)|h'(z)|-\mu_2,
\]
and hence
\[
|F_2'(z)|\le \frac{1+\mu_1}{1-\mu_1}|F_1'(z)|+\frac{2\mu_2}{1-\mu_1}
=K\,|F_1'(z)|+\sqrt{K'}\quad (z\in\mathbb D).
\]
Therefore, for every $a\in\mathbb D$ and $z\in\mathbb D$, we have
\begin{align*}
|(F_2\circ\sigma_a)'(z)|
&=|F_2'(\sigma_a(z))|\,|\sigma_a'(z)|
\le \bigl(K|F_1'(\sigma_a(z))|+\sqrt{K'}\bigr)\,|\sigma_a'(z)|  \\
&=K|(F_1\circ\sigma_a)'(z)|+\sqrt{K'}\,|\sigma_a'(z)|.
\end{align*}
Raising to the power $p$ and using $$(A+B)^p\le 2^{\max\{p-1,0\}}(A^p+B^p),$$ we obtain
\[
|(F_2\circ\sigma_a)'(z)|^{p}
\le 2^{\max\{p-1,0\}}
\left(
K^{p}|(F_1\circ\sigma_a)'(z)|^{p}
+(K')^{p/2}|\sigma_a'(z)|^{p}
\right).
\]
Multiplying by $(1-|z|^{2})^{\alpha}$, integrating over $\mathbb D$, and taking the supremum over $a\in\mathbb D$, we get
\begin{align*}
\|v\|_{Q_h(1,p,\alpha)}^{p}
&=\sup_{a\in\mathbb D}\int_{\mathbb D}|(F_2\circ\sigma_a)'(z)|^{p}(1-|z|^{2})^{\alpha}\,dA(z)\\
&\le 2^{\max\{p-1,0\}}
\left(
K^{p}\sup_{a\in\mathbb D}\int_{\mathbb D}|(F_1\circ\sigma_a)'(z)|^{p}(1-|z|^{2})^{\alpha}\,dA(z)
+(K')^{p/2}\,C_{p,\alpha}
\right)\\
&=2^{\max\{p-1,0\}}
\left(
K^{p}\,\|u\|_{Q_h(1,p,\alpha)}^{p}+(K')^{p/2}\,C_{p,\alpha}
\right),
\end{align*}
which proves the desired assertion.
\end{proof}

\begin{theorem}\label{thm:Fh-KK'}
Let $0<p<\infty$, $-2<q<\infty$ and $0<s<\infty$ satisfy $q+s>-1$.
Suppose that $f=u+iv$ is a harmonic $(K,K')$-quasiregular mapping in $\mathbb D$.
Assume that $u\in F_h(p,q,s)$. Then $v\in F_h(p,q,s)$ and
\[
\|v\|_{F_h(p,q,s)}^{p}\le 2^{\max\{p-1,0\}}
\left(K^{p}\,\|u\|_{F_h(p,q,s)}^{p}+(K')^{p/2}\,C_{q,s}\right),
\]
where
\[
C_{q,s}:=\sup_{a\in\mathbb D}\int_{\mathbb D}(1-|z|^{2})^{q}\bigl(1-|\sigma_a(z)|^{2}\bigr)^{s}\,dA(z)<\infty.
\]
\end{theorem}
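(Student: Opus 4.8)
The plan is to mirror, almost verbatim, the proof of Theorem~\ref{thm:Qh-CK21}, since the only structural difference between the $Q_h(1,p,\alpha)$ scale and the $F_h(p,q,s)$ scale is the choice of weight in the area integral: the M\"obius-pullback-and-weight $(1-|z|^2)^\alpha$ is replaced by the fixed weight $(1-|z|^2)^q$ together with the Green-type factor $g(z,a)^s$ (equivalently $(1-|\sigma_a(z)|^2)^s$, by Definition~\ref{def:Fh-pqs}). First I would reduce to $f(0)=0$, write $f=h+\overline g$ with $g(0)=0$, and set $F_1:=h+g$, $F_2:=h-g$, so that $u=\Re(F_1)$, $v=\Im(F_2)$, and $|F_1'(z)|=|\nabla u(z)|$, $|F_2'(z)|=|\nabla v(z)|$ pointwise. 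This part is identical to the earlier proofs.

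Next I would invoke the $(K,K')$-quasiregularity inequality $\Lambda_f(z)\le K\lambda_f(z)+\sqrt{K'}$ and translate it, exactly as in Theorem~\ref{thm:Qh-CK21}, into the pointwise bound
\[
|F_2'(z)|\le K\,|F_1'(z)|+\sqrt{K'}\qquad (z\in\mathbb D),
\]
using $f_z=h'$, $f_{\bar z}=\overline{g'}$, $\mu_1=(K-1)/(K+1)$, $\mu_2=\sqrt{K'}/(K+1)$, and the elementary manipulations $|F_2'|\le(1+\mu_1)|h'|+\mu_2$, $|F_1'|\ge(1-\mu_1)|h'|-\mu_2$. Raising to the power $p$ and applying $(A+B)^p\le 2^{\max\{p-1,0\}}(A^p+B^p)$ gives
\[
|\nabla v(z)|^{p}\le 2^{\max\{p-1,0\}}\bigl(K^{p}|\nabla u(z)|^{p}+(K')^{p/2}\bigr).
\]

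Then I would multiply through by $(1-|z|^2)^q(1-|\sigma_a(z)|^2)^s$, integrate over $\mathbb D$, and take the supremum over $a\in\mathbb D$. Using the equivalent semi-norm from Definition~\ref{def:Fh-pqs} (with $\Lambda_f\le|\nabla f|\le\sqrt2\,\Lambda_f$ absorbed into the $\approx$ constants, or—if one wants the clean constant—working directly with $|F_j'|$ since $|F_1'|=|\nabla u|$ and $|F_2'|=|\nabla v|$ are genuine equalities), the first term becomes $2^{\max\{p-1,0\}}K^{p}\|u\|_{F_h(p,q,s)}^{p}$ and the second becomes $2^{\max\{p-1,0\}}(K')^{p/2}C_{q,s}$ with $C_{q,s}$ as in the statement. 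Unlike the earlier proof, here there is no chain-rule/M\"obius-invariance step to perform, because the $F_h$ semi-norm already has the supremum over $a$ built into the weight rather than into a pullback; this actually makes the argument shorter.

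The one genuine point to address—the main (though minor) obstacle—is the finiteness claim $C_{q,s}<\infty$. I would verify this by the standard Forelli--Rudin type estimate: since $1-|\sigma_a(z)|^2=\dfrac{(1-|a|^2)(1-|z|^2)}{|1-\overline a z|^2}$, one has
\[
\int_{\mathbb D}(1-|z|^2)^{q}\bigl(1-|\sigma_a(z)|^2\bigr)^{s}\,dA(z)
=(1-|a|^2)^{s}\int_{\mathbb D}\frac{(1-|z|^2)^{q+s}}{|1-\overline a z|^{2s}}\,dA(z),
\]
and the hypotheses $q>-2$, $s>0$, $q+s>-1$ guarantee $q+s>-1$ (integrability of the numerator weight) so that this last integral is comparable to $(1-|a|^2)^{\min\{0,\,q+s+1-2s\}}$ up to a possible logarithmic factor; in every case the power of $(1-|a|^2)^s$ dominates and the supremum over $a\in\mathbb D$ is finite. (Equivalently, one notes $dA_{q+s}$-mass and applies \cite[Theorem~3.3]{ZR1} / the boundedness of the relevant $s$-Carleson-type functional.) With $C_{q,s}<\infty$ established, assembling the chain of inequalities yields the stated bound, and the inclusion $v\in F_h(p,q,s)$ follows since the right-hand side is finite. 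I do not expect any analytic difficulty beyond this weight estimate.
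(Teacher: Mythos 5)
Your argument is essentially identical to the paper's proof: the authors likewise reuse the pointwise bound $|F_2'(z)|\le K|F_1'(z)|+\sqrt{K'}$ from Theorem~\ref{thm:Qh-CK21}, raise it to the power $p$ via $(A+B)^p\le 2^{\max\{p-1,0\}}(A^p+B^p)$, multiply by $(1-|z|^2)^q(1-|\sigma_a(z)|^2)^s$, integrate, and take the supremum, with no chain-rule step needed. One small correction in your (welcome) verification of $C_{q,s}<\infty$, which the paper merely asserts: with $t=q+s$ and $2+t+c=2s$ one has $c=s-q-2$, so in the case $c>0$ the inner integral is comparable to $(1-|a|^2)^{q+2-s}$ rather than $(1-|a|^2)^{q+1-s}$; after multiplying by $(1-|a|^2)^s$ this gives the exponent $q+2>0$, so finiteness still holds (your stated exponent would wrongly suggest trouble for $-2<q<-1$).
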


\begin{proof}
Using the same notation and method as in the proof of Theorem \ref{thm:Qh-CK21},
we obtain
\[
|F_2'(z)|^{p}
\le 2^{\max\{p-1,0\}}
\left(
K^{p}|F_1'(z)|^{p}
+(K')^{p/2}
\right).
\]
Multiplying by $$(1-|z|^{2})^{q}\bigl(1-|\sigma_a(z)|^{2}\bigr)^{s},$$ integrating over $\mathbb D$, and taking the supremum over $a\in\mathbb D$, we get
\begin{align*}
\|v\|_{F_h(p,q,s)}^{p}
&=\sup_{a\in\mathbb D}\int_{\mathbb D}|F_2'(z)|^{p}(1-|z|^{2})^{q}\bigl(1-|\sigma_a(z)|^{2}\bigr)^{s}\,dA(z)\\
&\le 2^{\max\{p-1,0\}}
\left(
K^{p}\sup_{a\in\mathbb D}\int_{\mathbb D}|F_1'(z)|^{p}(1-|z|^{2})^{q}\bigl(1-|\sigma_a(z)|^{2}\bigr)^{s}\,dA(z)
+(K')^{p/2}\,C_{q,s}
\right)\\
&=2^{\max\{p-1,0\}}
\left(
K^{p}\,\|u\|_{F_h(p,q,s)}^{p}+(K')^{p/2}\,C_{q,s}
\right),
\end{align*}
which proves the desired assertion of Theorem \ref{thm:Fh-KK'}.
\end{proof}

\begin{coro}\label{coro:morrey-KK'}
Let $0<\lambda<1$. Suppose that $f=u+iv$ is a harmonic $(K,K')$-quasiregular mapping in $\mathbb D$.
Assume that $u\in \mathcal{L}_{h}^{2,\lambda}$.
Then $v\in \mathcal{L}_{h}^{2,\lambda}$ and
\[
\|v\|_{\mathcal{L}_{h}^{2,\lambda}}^{2}
\le
2\left(
K^{2}\,\|u\|_{\mathcal{L}_{h}^{2,\lambda}}^{2}+K'\,C_{\lambda}
\right),
\]
where
\[
C_{\lambda}:=\sup_{a\in\mathbb D}\int_{\mathbb D}
(1-|z|^{2})^{1-\lambda}\bigl(1-|\sigma_a(z)|^{2}\bigr)^{\lambda}\,dA(z)<\infty.
\]
\end{coro}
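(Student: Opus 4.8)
The plan is to obtain Corollary~\ref{coro:morrey-KK'} as the parameter specialization $(p,q,s)=(2,1-\lambda,\lambda)$ of Theorem~\ref{thm:Fh-KK'}, together with the standard identification $\mathcal{L}^{2,\lambda}=F(2,1-\lambda,\lambda)$ (with equivalent norms; see the remark after Corollary~\ref{coro:morrey-K}) and its harmonic analogue $\mathcal{L}_{h}^{2,\lambda}=F_h(2,1-\lambda,\lambda)$, understood in the ``same defining formula'' sense of the paragraph preceding Corollary~\ref{coro:morrey-K}.

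First I would check admissibility of the chosen parameters. For $0<\lambda<1$ one has $p=2\in(0,\infty)$, $q=1-\lambda\in(0,1)\subset(-2,\infty)$, $s=\lambda\in(0,\infty)$, and $q+s=(1-\lambda)+\lambda=1>-1$, so the hypotheses of Theorem~\ref{thm:Fh-KK'} hold. Since $u\in\mathcal{L}_{h}^{2,\lambda}=F_h(2,1-\lambda,\lambda)$, Theorem~\ref{thm:Fh-KK'} gives $v\in F_h(2,1-\lambda,\lambda)$ and, using $\max\{p-1,0\}=\max\{1,0\}=1$ and $(K')^{p/2}=(K')^{1}=K'$ at $p=2$,
\begin{align*}
\|v\|_{F_h(2,1-\lambda,\lambda)}^{2}
&\le 2^{\max\{1,0\}}\Bigl(K^{2}\,\|u\|_{F_h(2,1-\lambda,\lambda)}^{2}+(K')^{1}\,C_{1-\lambda,\lambda}\Bigr)\\
&=2\Bigl(K^{2}\,\|u\|_{F_h(2,1-\lambda,\lambda)}^{2}+K'\,C_{1-\lambda,\lambda}\Bigr),
\end{align*}
where $C_{1-\lambda,\lambda}=\sup_{a\in\mathbb D}\int_{\mathbb D}(1-|z|^{2})^{1-\lambda}\bigl(1-|\sigma_a(z)|^{2}\bigr)^{\lambda}\,dA(z)$ is precisely the constant $C_\lambda$ of the statement; its finiteness is the case $q+s=1>-1$ of the finiteness of $C_{q,s}$ already recorded in Theorem~\ref{thm:Fh-KK'}.

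Finally I would transfer the estimate to the Morrey scale. By the harmonic-counterpart convention, the (semi-)norm of $\mathcal{L}_{h}^{2,\lambda}$ is the very area integral defining $F_h(2,1-\lambda,\lambda)$, so the displayed bound reads $\|v\|_{\mathcal{L}_{h}^{2,\lambda}}^{2}\le 2\bigl(K^{2}\|u\|_{\mathcal{L}_{h}^{2,\lambda}}^{2}+K'\,C_\lambda\bigr)$, which is the assertion. I do not expect any genuine obstacle: all the analytic content already sits in Theorem~\ref{thm:Fh-KK'}, and the only point needing care is which realization of the Morrey norm one works with. If one prefers to start from the intrinsic definition via $\|f\circ\sigma_a-f(a)\|_{H^2}$, one first reduces to the gradient form by splitting $f=h+\overline g$, applying the Littlewood--Paley identity to $h\circ\sigma_a-h(a)$ and $g\circ\sigma_a-g(a)$ separately (the cross term vanishing since both vanish at the origin), and then invoking the norm equivalences in Lemma~\ref{equivalent conditions}; this only costs a fixed comparability constant, which may be absorbed on the right-hand side.
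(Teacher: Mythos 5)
Your proposal is correct and is exactly the argument the paper intends: the corollary is the specialization of Theorem~\ref{thm:Fh-KK'} to $(p,q,s)=(2,1-\lambda,\lambda)$ under the identification $\mathcal{L}^{2,\lambda}=F(2,1-\lambda,\lambda)$, with $2^{\max\{p-1,0\}}=2$, $(K')^{p/2}=K'$, and $C_{1-\lambda,\lambda}=C_\lambda$. Your closing caveat about which realization of the Morrey norm is used (exact constant for the $F_h$-type integral versus only comparability for the intrinsic $H^2$-based norm) is a fair and worthwhile observation that the paper itself leaves implicit.
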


\begin{coro}\label{coro:bergman-morrey-KK'}
Let $0<\lambda<2$ and $0<p<\infty$. Suppose that $f=u+iv$ is a harmonic $(K,K')$-quasiregular mapping in $\mathbb D$.
Assume that $u\in A_{h}^{p,\lambda}$.
Then $v\in A_{h}^{p,\lambda}$ and
\[
\|v\|_{A_{h}^{p,\lambda}}^{p}
\le
2^{\max\{p-1,0\}}
\left(
K^{p}\,\|u\|_{A_{h}^{p,\lambda}}^{p}+(K')^{p/2}\,C_{p,\lambda}
\right),
\]
where
\[
C_{p,\lambda}:=\sup_{a\in\mathbb D}\int_{\mathbb D}
(1-|z|^{2})^{p-\lambda}\bigl(1-|\sigma_a(z)|^{2}\bigr)^{\lambda}\,dA(z)<\infty.
\]
\end{coro}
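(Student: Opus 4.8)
The plan is to read this off directly from Theorem~\ref{thm:Fh-KK'} via the parameter identification $A^{p,\lambda}=F(p,p-\lambda,\lambda)$. Recall that $A^{p,\lambda}$ coincides with $F(p,p-\lambda,\lambda)$ up to equivalent norms (see the norm equivalences for $A^{p,\lambda}$ recorded in Section~\ref{section02} and the remark following Corollary~\ref{coro:bergman-morrey-K}); accordingly, by the convention adopted above for harmonic counterparts, $A_h^{p,\lambda}$ is the harmonic space $F_h(p,p-\lambda,\lambda)$ equipped with the corresponding semi-norm. So I would set $q:=p-\lambda$ and $s:=\lambda$ and then simply invoke Theorem~\ref{thm:Fh-KK'}; as in that proof one may assume $f(0)=0$, so base-point terms play no role.

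First I would check that the triple $(p,\,p-\lambda,\,\lambda)$ is admissible for Theorem~\ref{thm:Fh-KK'}: since $p>0$ and $\lambda<2$ we have $q=p-\lambda>-2$; trivially $0<s=\lambda<\infty$; and $q+s=p>0>-1$. Hence, for the harmonic $(K,K')$-quasiregular mapping $f=u+iv$ with $u\in F_h(p,p-\lambda,\lambda)=A_h^{p,\lambda}$, Theorem~\ref{thm:Fh-KK'} applies and yields $v\in F_h(p,p-\lambda,\lambda)=A_h^{p,\lambda}$ together with
\[
\|v\|_{A_h^{p,\lambda}}^{p}=\|v\|_{F_h(p,p-\lambda,\lambda)}^{p}
\le 2^{\max\{p-1,0\}}\left(K^{p}\,\|u\|_{F_h(p,p-\lambda,\lambda)}^{p}+(K')^{p/2}\,C_{p-\lambda,\lambda}\right),
\]
where $C_{p-\lambda,\lambda}=\sup_{a\in\mathbb D}\int_{\mathbb D}(1-|z|^{2})^{p-\lambda}(1-|\sigma_a(z)|^{2})^{\lambda}\,dA(z)$ is precisely the constant $C_{p,\lambda}$ appearing in the statement, so that the claimed inequality follows verbatim.

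The only remaining point to record is the finiteness of $C_{p,\lambda}$, which is already asserted inside Theorem~\ref{thm:Fh-KK'}; if one wishes to see it directly, the substitution $w=\sigma_a(z)$ reduces it to $(1-|a|^{2})^{p-\lambda+2}\int_{\mathbb D}(1-|w|^{2})^{p}\,|1-\bar a w|^{-2(p-\lambda)-4}\,dA(w)$, and the standard Forelli--Rudin estimate (applicable since $p>-1$) shows this stays bounded uniformly in $a\in\mathbb D$, the surplus powers of $1-|a|^{2}$ combining to a bounded factor because $\lambda>0$. I do not expect any genuine obstacle here: the entire content of the corollary is contained in Theorem~\ref{thm:Fh-KK'}, and the only work is the bookkeeping of the substitution $(q,s)=(p-\lambda,\lambda)$ and the matching of the two constants.
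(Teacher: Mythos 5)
Your proposal is correct and matches the paper's own treatment: the paper gives no separate proof of this corollary, only the remark that it is the specialization of Theorem~\ref{thm:Fh-KK'} under the identification $A^{p,\lambda}=F(p,p-\lambda,\lambda)$, which is exactly the substitution $(q,s)=(p-\lambda,\lambda)$ you carry out, with the parameter checks and the matching of $C_{p-\lambda,\lambda}$ with $C_{p,\lambda}$ done correctly.
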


\begin{remark}
{\rm This is the specialization of Theorem~\ref{thm:Fh-KK'} to the case
$A^{p,\lambda}=F(p,p-\lambda,\lambda)$ (with equivalent norms), see e.g., \cite{YL,ZR,ZR1}.}
\end{remark}

\begin{coro}\label{coro:Qs-KK'}
Let $s>0$. Suppose that $f=u+iv$ is a harmonic $(K,K')$-quasiregular mapping in $\mathbb D$.
Assume that $u\in Q_{s,h}$.
Then $v\in Q_{s,h}$ and
\[
\|v\|_{Q_{s,h}}^{2}
\le
2\left(
K^{2}\,\|u\|_{Q_{s,h}}^{2}+K'\,C_{s}
\right),
\]
where
\[
C_{s}:=\sup_{a\in\mathbb D}\int_{\mathbb D}
\bigl(1-|\sigma_a(z)|^{2}\bigr)^{s}\,dA(z)<\infty.
\]
\end{coro}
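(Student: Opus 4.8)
The plan is to obtain the corollary as a direct specialization of Theorem~\ref{thm:Fh-KK'}, using the classical identification $Q_s=F(2,0,s)$ with equivalent semi-norms. This identification passes to the harmonic counterparts: by the convention fixed above, $Q_{s,h}$ is obtained from $Q_s$ by keeping the same defining area integral and replacing $|f'|$ with $\Lambda_f$ (equivalently $|\nabla f|$), and the same holds for $F_h(2,0,s)$; since $g(z,a)^s\approx\bigl(1-|\sigma_a(z)|^2\bigr)^s$ on $\mathbb D$ (recorded in Definition~\ref{def:Fh-pqs}), the two harmonic families coincide and both (semi-)norms reduce to $\sup_{a\in\mathbb D}\int_{\mathbb D}\Lambda_f(z)^2\bigl(1-|\sigma_a(z)|^2\bigr)^s\,dA(z)$. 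So it suffices to invoke Theorem~\ref{thm:Fh-KK'} at $(p,q,s)=(2,0,s)$.

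First I would check admissibility of these parameters: one has $0<p=2<\infty$, $-2<q=0<\infty$, $0<s<\infty$, and $q+s=s>-1$ since $s>0$; hence Theorem~\ref{thm:Fh-KK'} applies to $f=u+iv$ and yields $v\in F_h(2,0,s)$ together with
\[
\|v\|_{F_h(2,0,s)}^{2}\le 2^{\max\{p-1,0\}}\Bigl(K^{p}\,\|u\|_{F_h(2,0,s)}^{p}+(K')^{p/2}\,C_{q,s}\Bigr).
\]
Then I would simplify the constants at $p=2$: $2^{\max\{p-1,0\}}=2$, $K^{p}=K^{2}$, $(K')^{p/2}=K'$, and, since $(1-|z|^2)^q\equiv 1$ when $q=0$,
\[
C_{q,s}=\sup_{a\in\mathbb D}\int_{\mathbb D}\bigl(1-|\sigma_a(z)|^{2}\bigr)^{s}\,dA(z)=C_{s}.
\]
The finiteness $C_s<\infty$ is already contained in Theorem~\ref{thm:Fh-KK'} (the case $q=0$); if one prefers a direct argument, it follows from $1-|\sigma_a(z)|^{2}=(1-|a|^2)(1-|z|^2)\,|1-\bar a z|^{-2}$ combined with the standard growth estimate for $\int_{\mathbb D}(1-|z|^2)^{s}|1-\bar a z|^{-2s}\,dA(z)$, which is valid for every $s>0$. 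Substituting these values into the displayed inequality gives precisely $\|v\|_{Q_{s,h}}^{2}\le 2\bigl(K^{2}\|u\|_{Q_{s,h}}^{2}+K'\,C_s\bigr)$.

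The only bookkeeping that needs a word of care concerns the base point. As in the proofs of Theorems~\ref{v<u-F} and \ref{thm:Fh-KK'}, one normalizes $f(0)=0$ without loss of generality --- subtracting the constant $f(0)$ alters neither $\Lambda_f$, $\lambda_f$, $J_f$, nor the harmonic $(K,K')$-quasiregularity condition --- so that $u(0)=v(0)=0$ and any point-evaluation contribution in the $Q_{s,h}$ norm vanishes, leaving exactly the area integrals treated by Theorem~\ref{thm:Fh-KK'}. I do not expect any genuine obstacle: the statement is a transcription of Theorem~\ref{thm:Fh-KK'} at $(p,q,s)=(2,0,s)$, and Corollaries~\ref{coro:morrey-KK'} and \ref{coro:bergman-morrey-KK'} would be proved identically via $\mathcal L^{2,\lambda}=F(2,1-\lambda,\lambda)$ and $A^{p,\lambda}=F(p,p-\lambda,\lambda)$, respectively.
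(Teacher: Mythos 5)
Your proposal is correct and matches the paper's intended argument exactly: the paper states this corollary as an immediate specialization of Theorem~\ref{thm:Fh-KK'} under the identification $Q_s=F(2,0,s)$, which is precisely what you carry out, including the parameter check and the simplification of the constants at $p=2$, $q=0$. The extra care you take with the basepoint normalization and the equivalence of the $g(z,a)^s$ and $(1-|\sigma_a(z)|^2)^s$ formulations is consistent with the conventions the paper already uses in the proofs of Theorems~\ref{v<u-F} and~\ref{thm:Fh-KK'}.
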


\section{Harmonic quasiconformal mappings in function spaces}\label{section3}

\rm{In this section, we derive integrability and growth consequences for normalized harmonic quasiconformal mappings $f$ in $\mathcal S_H(K)$.
The key input is the distortion theory encoded in the order $\alpha_K$, which governs the admissible parameter ranges in the $M_h(p,q,s)$ and $F_h(p
,q,s)$ scales considered below.}

\begin{theorem}\label{M-th3}
Suppose that $f\in \mathcal{S}_H(K)$ for some $K\geq1$. Then
\begin{enumerate}
\item[(i)] $f\in M_h(p,q,s)$ for all
\begin{equation}\label{M_h(p,q,s)-p}
p<\min\left\{\frac{q+s+1}{\alpha_K},\,\frac{q+2}{\alpha_K}\right\};
\end{equation}
\item[(ii)] $f_z,\overline{f_{\bar z}}\in M(r,q,s)$, $f_{\theta},\,b f_b\in M_h(r,q,s)$ for all
\begin{equation}\label{M_h(p,q,s)-r}
r<\min\left\{\frac{q+s+1}{\alpha_K+1},\,\frac{q+2}{\alpha_K+1}\right\}.
\end{equation}
\end{enumerate}
\end{theorem}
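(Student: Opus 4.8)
The plan is to reduce everything to a single pointwise growth estimate and then feed it into a standard Möbius-invariant integral test. The key input is the distortion theory for $\mathcal S_H(K)$ encoded in the order $\alpha_K$: for $f=h+\overline g\in\mathcal S_H(K)$ one has the growth bounds
\[
|h(z)|+|g(z)|\lesssim \frac{1}{(1-|z|^2)^{\alpha_K}},\qquad
|h'(z)|+|g'(z)|\lesssim \frac{1}{(1-|z|^2)^{\alpha_K+1}}\qquad(z\in\mathbb D),
\]
which follow from the definition of $\alpha_K$ together with subordination/coefficient arguments (these are the harmonic quasiconformal analogues of the classical Koebe-type estimates; cf.\ \cite{DR,DRN,LZ}). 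Granting these, $|f(z)|\le|h(z)|+|g(z)|\lesssim(1-|z|^2)^{-\alpha_K}$, and likewise $|f_z|=|h'|$, $|f_{\bar z}|=|g'|$, $|f_\theta|\le|z|(|h'|+|g'|)$, $|bf_b|\le|z|(|h'|+|g'|)$ are all $\lesssim (1-|z|^2)^{-(\alpha_K+1)}$. This is where the two different exponent thresholds in \eqref{M_h(p,q,s)-p} and \eqref{M_h(p,q,s)-r} come from: $\alpha_K$ for $f$ itself, $\alpha_K+1$ for the first-order quantities.

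Next I would record the elementary integral lemma that does all the work: for real parameters with $q+s>-1$,
\[
\sup_{a\in\mathbb D}\int_{\mathbb D}(1-|z|^2)^{-\beta p}(1-|z|^2)^{q}\bigl(1-|\sigma_a(z)|^2\bigr)^{s}\,dA(z)<\infty
\]
precisely when $q-\beta p>-1$ and $q+s-\beta p>-1$, i.e.\ when $\beta p<\min\{q+2,\;q+s+1\}$. One proves this by the change of variables $z\mapsto\sigma_a(z)$ (using $\sigma_a\circ\sigma_a=\mathrm{id}$ and $|\sigma_a'(z)|^2=(1-|a|^2)^2/|1-\bar a z|^4$, together with $1-|\sigma_a(z)|^2=(1-|a|^2)(1-|z|^2)/|1-\bar a z|^2$) to convert the integral into one against the weight $(1-|z|^2)^{s}$ times a power of $|1-\bar a z|$, and then invoking the classical Forelli–Rudin type estimate $\int_{\mathbb D}(1-|z|^2)^{c}|1-\bar a z|^{-d}\,dA(z)$ asymptotics; the two constraints $q-\beta p>-1$ and $q+s-\beta p>-1$ correspond respectively to integrability near a generic interior point and to integrability of the resulting $|1-\bar a z|$ power uniformly in $a$. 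This lemma may already be available in \cite{ZR,ZR1} and could simply be cited.

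With these two ingredients the proof is immediate. For part (i), apply the pointwise bound $|f(z)|^p\lesssim(1-|z|^2)^{-\alpha_K p}$ inside the defining integral for $\|f\|_{M_h(p,q,s)}$ and invoke the integral lemma with $\beta=\alpha_K$; finiteness holds exactly when $\alpha_K p<\min\{q+2,\,q+s+1\}$, which is \eqref{M_h(p,q,s)-p}. Since $f(0)$ is finite, $\|f\|_{M_h(p,q,s)}<\infty$. For part (ii), apply the pointwise bound at the level of derivatives, $|f_z(z)|^r=|h'(z)|^r\lesssim(1-|z|^2)^{-(\alpha_K+1)r}$ and similarly for $\overline{f_{\bar z}}=\overline{g'}$, $f_\theta$, $bf_b$ (all controlled by $|h'|+|g'|$), and invoke the integral lemma with $\beta=\alpha_K+1$; finiteness holds exactly when $(\alpha_K+1)r<\min\{q+2,\,q+s+1\}$, which is \eqref{M_h(p,q,s)-r}. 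Note $f_z,\overline{f_{\bar z}}$ land in the analytic space $M(r,q,s)$ while $f_\theta,bf_b$ are genuinely harmonic, matching the statement. The main obstacle, and the only nontrivial point, is establishing (or correctly citing) the sharp derivative growth estimate $|h'|+|g'|\lesssim(1-|z|^2)^{-(\alpha_K+1)}$ for the full class $\mathcal S_H(K)$ from the definition of $\alpha_K$ as a bound on $|h''(0)|$; this requires a Cauchy-estimate/rescaling argument exploiting the affine and Möbius invariance of the class, and is precisely the step adapted from Das–Rasila \cite{DR}.
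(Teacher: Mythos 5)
Your proposal follows essentially the same route as the paper: the same pointwise growth bounds $|f(z)|\lesssim(1-|z|^2)^{-\alpha_K}$ and $|h'(z)|+|g'(z)|\lesssim(1-|z|^2)^{-(\alpha_K+1)}$, obtained from the M\"obius/affine rescaling $F_\zeta$ and the definition of $\alpha_K$ (the step you correctly single out as the crux), are fed into the same Forelli--Rudin type estimate (the paper invokes \cite[Lemma~3.10]{ZK}) with $\beta=\alpha_K$ for $f$ and $\beta=\alpha_K+1$ for $f_z$, $\overline{f_{\bar z}}$, $f_\theta$, $bf_b$. The only slip is the inequality ``$q-\beta p>-1$'' in your integral lemma, which should read $q-\beta p>-2$ (equivalently $\beta p<q+2$, coming from the bound $(1-|a|^2)^{q+2-\beta p}$ in the case $c>0$); since the condition you actually apply is the correct one, $\beta p<\min\{q+2,\,q+s+1\}$, this does not affect the argument.
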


\begin{proof}
We write $f=h+\overline{g}$, where $h$ and $g$ are analytic in $\mathbb{D}$.

\medskip
{(i)} Fix $\zeta\in\mathbb{D}$. Define
\[
F_\zeta(z)=\frac{f\!\left(\frac{z+\zeta}{1+\overline{\zeta}z}\right)-f(\zeta)}{(1-|\zeta|^2)\,h'(\zeta)}\quad (z\in\mathbb{D}).
\]
Then $F_\zeta\in \mathcal{S}_H(K)$, and
\[
F_{zz}(0)=(1-|\zeta|^2)\frac{h''(\zeta)}{h'(\zeta)}-2\overline{\zeta}.
\]
Since $|F_{zz}(0)|\le 2\alpha_K$, we obtain
\[
\left|\frac{h''(\zeta)}{h'(\zeta)}\right|\le \frac{C}{1-|\zeta|}\quad (\zeta\in\mathbb{D}),
\]
equivalently,
\[
\left|\frac{h''(re^{i\theta})}{h'(re^{i\theta})}\right|\le \frac{C}{1-r}\quad (0\le r<1).
\]
On the other hand, arguing as in \cite[p.\,98]{DP},
\[
|h'(re^{i\theta})|\le \frac{(1+r)^{\alpha_K-1}}{(1-r)^{\alpha_K+1}},
\quad
|h'(z)|\lesssim \frac{1}{(1-|z|^2)^{\alpha_K+1}}.
\]
Since $w={g'}/{h'}$ satisfies $|w|\le k<1$, we have
\[
|g'(z)|\le k|h'(z)|\lesssim \frac{1}{(1-|z|^2)^{\alpha_K+1}}.
\]
Moreover, integrating $h'$ radially yields
\[
|h(re^{i\theta})-h(0)|
\le \int_0^r |h'(te^{i\theta})|\,dt
\lesssim \int_0^r \frac{1}{(1-t)^{\alpha_K+1}}\,dt
\lesssim (1-r)^{-\alpha_K},
\]
which implies that
\[
|h(z)|\lesssim \frac{1}{(1-|z|^2)^{\alpha_K}},
\quad
|g(z)|\lesssim \frac{1}{(1-|z|^2)^{\alpha_K}},
\]
therefore,
\[
|f(z)|\le |h(z)|+|g(z)|\lesssim \frac{1}{(1-|z|^2)^{\alpha_K}}.
\]

For $a\in\mathbb D$, set
\begin{align*}
I_p(a)
&:=\int_{\mathbb{D}} |f(z)|^{p}(1-|z|^2)^{q}\,(1-|\sigma_a(z)|^2)^{s}\,dA(z).
\end{align*}
By \cite[Lemma 3.10]{ZK}, we see that
\begin{align*}
I_p(a)
&\lesssim (1-|a|^2)^{s}\int_{\mathbb{D}}
\frac{(1-|z|^2)^{q+s-p\alpha_K}}{|1-\overline{a}z|^{2s}}\,dA(z).
\end{align*}
By setting
\[
t:=q+s-p\alpha_K,\quad c:=s-q-2+p\alpha_K,
\]
then $$2+t+c=2s.$$ If $t>-1$, \cite[Lemma 3.10]{ZK} yields the three cases
\[
\int_{\mathbb{D}}
\frac{(1-|z|^2)^{t}}{|1-\overline{a}z|^{2+t+c}}\,dA(z)
\lesssim
\begin{cases}
1, & c<0,\\[1mm]
\log\!\dfrac{1}{1-|a|^2}, & c=0,\\[2mm]
(1-|a|^2)^{-c}, & c>0,
\end{cases}
\]
and consequently,
\[
I_p(a)\lesssim
\begin{cases}
(1-|a|^2)^s, & c<0,\\[1mm]
(1-|a|^2)^s\log\!\dfrac{1}{1-|a|^2}, & c=0,\\[2mm]
(1-|a|^2)^{s-c}=(1-|a|^2)^{2+q-p\alpha_K}, & c>0.
\end{cases}
\]
Suppose that \eqref{M_h(p,q,s)-p} holds. Then $t>-1$ and $2+q-p\alpha_K>0$, so each of the above three bounds is uniformly bounded in $a\in\mathbb D$ (note that $s>0$).
Therefore, $$\sup_{a\in\mathbb D} I_p(a)<\infty,$$ which shows that $f\in M_h(p,q,s)$.

\medskip
{(ii)} From part (i), we find that 
\[
|h'(z)|\lesssim \frac{1}{(1-|z|^{2})^{\alpha_K+1}},
\]
hence
\[
|h'(z)|^{r}\lesssim \frac{1}{(1-|z|^{2})^{r(\alpha_K+1)}}.
\]
For $a\in\mathbb D$, set
\begin{align*}
I_r(a)
&:=\int_{\mathbb{D}} |h'(z)|^{r}(1-|z|^{2})^{q}\,(1-|\sigma_a(z)|^{2})^{s}\,dA(z)\\
&\lesssim (1-|a|^{2})^{s}\int_{\mathbb{D}}
\frac{(1-|z|^{2})^{q+s-r(\alpha_K+1)}}{|1-\overline{a}z|^{2s}}\,dA(z).
\end{align*}
If we put
\[
t:=q+s-r(\alpha_K+1),\quad c:=s-q-2+r(\alpha_K+1),
\]
then $$2+t+c=2s.$$ Applying \cite[Lemma 3.10]{ZK} exactly as in part (i), we obtain (for $t>-1$)
\[
I_r(a)\lesssim
\begin{cases}
(1-|a|^2)^s, & c<0,\\[1mm]
(1-|a|^2)^s\log\!\dfrac{1}{1-|a|^2}, & c=0,\\[2mm]
(1-|a|^2)^{s-c}=(1-|a|^2)^{2+q-r(\alpha_K+1)}, & c>0.
\end{cases}
\]
Assume that \eqref{M_h(p,q,s)-r} holds. Then $t>-1$ and $2+q-r(\alpha_K+1)>0$. Hence $$\sup\limits_{a\in\mathbb D}I_r(a)<\infty$$ and $f_z=h'\in M(r,q,s)$.

Next, since $|g'(z)|\le k|h'(z)|$, we know that
\[
|g'(z)|^{r}\le k^{r}|h'(z)|^{r},
\quad
\|g'\|_{M(r,q,s)}\lesssim k\|h'\|_{M(r,q,s)},
\]
which implies that $\overline{f_{\bar z}}=g'\in M(r,q,s)$.

Finally, using
\[
-i f_{\theta}=zh'(z)-\overline{z}\,\overline{g'(z)},\quad
bf_{b}=zh'(z)+\overline{z}\,\overline{g'(z)},
\]
we have
\[
|f_{\theta}(z)|\le |h'(z)|+|g'(z)|\le (1+k)\,|h'(z)|,
\quad
|b f_{b}(z)|\le |h'(z)|+|g'(z)|\le (1+k)\,|h'(z)|.
\]
Raising to the power $r$ and using $h'\in M(r,q,s)$ yields $f_{\theta},\, b f_{b}\in M_{h}(r,q,s)$.
This completes the proof of Theorem \ref{M-th3}.
\end{proof}

\begin{theorem}\label{F-th3}
Suppose that $f\in \mathcal{S}_H(K)$ for some $K\geq1$. Then
\begin{enumerate}
\item[(i)] $f\in F_h(p,q,s)$ for all
\begin{equation}\label{F_h(p,q,s)-p}
p<\min\left\{\frac{q+s+1}{\alpha_K +1},\,\frac{q+2}{\alpha_K +1}\right\};
\end{equation}
\item[(ii)] $f_z,\overline{f_{\bar z}}\in F(r,q,s)$, $f_{\theta},\,b f_b\in F_h(r,q,s)$ for all
\begin{equation}\label{F_h(p,q,s)-r}
r<\min\left\{\frac{q+s+1}{\alpha_K+2},\,\frac{q+2}{\alpha_K+2}\right\}.
\end{equation}
\end{enumerate}
\end{theorem}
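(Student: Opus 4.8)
The plan is to run the argument of Theorem~\ref{M-th3} almost verbatim, the only structural change being that the $M_h$-integrand $|f|^{p}$ is replaced by the gradient size $\Lambda_f^{p}$ appearing in the $F_h$-norm; the shift of the relevant exponent from $\alpha_K$ in Theorem~\ref{M-th3} to $\alpha_K+1$ in \eqref{F_h(p,q,s)-p} and to $\alpha_K+2$ in \eqref{F_h(p,q,s)-r} simply records that the $F$-scale is \emph{derivative-based}. Throughout I reuse, verbatim from the proof of Theorem~\ref{M-th3}, the distortion estimates
\[
\left|\frac{h''(z)}{h'(z)}\right|\lesssim\frac{1}{1-|z|},\qquad |h'(z)|\lesssim\frac{1}{(1-|z|^{2})^{\alpha_K+1}},\qquad |g'(z)|\le k\,|h'(z)|,
\]
where $k$ is as in \eqref{kandK}.

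For part~(i), these bounds give $\Lambda_f(z)=|h'(z)|+|g'(z)|\le(1+k)|h'(z)|\lesssim(1-|z|^{2})^{-(\alpha_K+1)}$, hence $\Lambda_f(z)^{p}\lesssim(1-|z|^{2})^{-p(\alpha_K+1)}$. I insert this into the $F_h(p,q,s)$-integral written in the equivalent form with the weight $(1-|\sigma_a(z)|^{2})^{s}$ (Definition~\ref{def:Fh-pqs}), use $1-|\sigma_a(z)|^{2}=(1-|a|^{2})(1-|z|^{2})/|1-\overline a z|^{2}$, and reduce the problem to showing
\[
\sup_{a\in\mathbb D}\,(1-|a|^{2})^{s}\int_{\mathbb D}\frac{(1-|z|^{2})^{\,q+s-p(\alpha_K+1)}}{|1-\overline a z|^{2s}}\,dA(z)<\infty.
\]
With $t:=q+s-p(\alpha_K+1)$ and $c:=s-q-2+p(\alpha_K+1)$ (so $2+t+c=2s$), the hypothesis $t>-1$ reads $p<(q+s+1)/(\alpha_K+1)$, and \cite[Lemma 3.10]{ZK}, applied exactly as in Theorem~\ref{M-th3}, furnishes a bound that is uniform in $a$ precisely when $s-c=q+2-p(\alpha_K+1)\ge0$, i.e.\ $p\le(q+2)/(\alpha_K+1)$ (the cases $c\le0$ are harmless since $s>0$). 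This is \eqref{F_h(p,q,s)-p}, so $f\in F_h(p,q,s)$.

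For part~(ii) the new ingredient is a second-order growth estimate. Combining $|h''/h'|\lesssim(1-|z|)^{-1}$ with $|h'|\lesssim(1-|z|^{2})^{-(\alpha_K+1)}$ gives $|h''(z)|\lesssim(1-|z|^{2})^{-(\alpha_K+2)}$. For $g$, I write $g'=wh'$ with $w=g'/h'$ and $|w|\le k<1$; the Schwarz--Pick inequality for $w$ yields $|w'(z)|\le(1-|z|^{2})^{-1}$, so $g''=w'h'+wh''$ gives $|g''(z)|\lesssim(1-|z|^{2})^{-(\alpha_K+2)}$ as well. Since $f_z=h'$ and $\overline{f_{\bar z}}=g'$ are analytic, membership in $F(r,q,s)$ means finiteness of $\sup_{a}\int_{\mathbb D}|h''|^{r}(1-|z|^{2})^{q}(1-|\sigma_a(z)|^{2})^{s}\,dA$, resp.\ the same integral with $|g''|$ in place of $|h''|$; feeding $|h''|^{r},|g''|^{r}\lesssim(1-|z|^{2})^{-r(\alpha_K+2)}$ into the reduction of part~(i) — now with $t=q+s-r(\alpha_K+2)$, $c=s-q-2+r(\alpha_K+2)$ — the two constraints $t>-1$ and $s-c=q+2-r(\alpha_K+2)\ge0$ collapse to \eqref{F_h(p,q,s)-r}. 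Finally, for $f_\theta$ and $bf_b$ I use the identities $-if_\theta=zh'(z)-\overline z\,\overline{g'(z)}$ and $bf_b=zh'(z)+\overline z\,\overline{g'(z)}$ and differentiate: a short computation gives $\Lambda_{f_\theta}(z),\Lambda_{bf_b}(z)\le|h'(z)|+|h''(z)|+|g'(z)|+|g''(z)|\lesssim(1-|z|^{2})^{-(\alpha_K+2)}$, so the $F_h(r,q,s)$-integral for these is estimated exactly as above and again gives \eqref{F_h(p,q,s)-r}.

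I expect the main obstacle to be the sharp second-order control, above all $|g''(z)|\lesssim(1-|z|^{2})^{-(\alpha_K+2)}$: the crude bound $|g'|\le k|h'|$ does not survive differentiation, so one genuinely needs the Schwarz--Pick inequality for the dilatation $w$ (and not merely $|w|\le k$) to control $w'$. Everything else is bookkeeping: checking that the two constraints extracted from \cite[Lemma 3.10]{ZK} combine to exactly the stated ranges, that the borderline logarithmic case $c=0$ is absorbed because $s>0$, and — when $r<1$ — applying $(A+B)^{r}\le2^{\max\{r-1,0\}}(A^{r}+B^{r})$ and its multi-term analogue when expanding $\Lambda_f^{r}$, $\Lambda_{f_\theta}^{r}$, and $\Lambda_{bf_b}^{r}$.
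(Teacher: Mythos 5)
Your proposal is correct and follows essentially the same route as the paper: the same first- and second-order distortion bounds (including the Schwarz--Pick control of $w'$ to get $|g''(z)|\lesssim(1-|z|^2)^{-(\alpha_K+2)}$), the same reduction to \cite[Lemma 3.10]{ZK} with the exponents $t$ and $c$, and the same treatment of $f_\theta$ and $bf_b$ via the Wirtinger identities. The only cosmetic differences are that the paper applies Schwarz--Pick to $w/k$ rather than to $w$ directly, and it states the second constraint as the strict inequality $q+2-p(\alpha_K+1)>0$ matching the theorem's hypothesis, whereas you observe that $\ge 0$ already suffices.
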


\begin{proof}
We keep the notation of the statement and write $f=h+\overline g$ with $h,g\in H(\mathbb D)$.

By the proof of Theorem \ref{M-th3}, we obtain
\[
|h'(z)|\lesssim \frac{1}{(1-|z|^2)^{\alpha_K+1}},
\quad
\left|\frac{h''(z)}{h'(z)}\right|\lesssim \frac{1}{1-|z|^2}.
\]
Let $w={g'}/{h'}$ be the analytic dilatation of $f$. Since $|w|\le k<1$ in $\mathbb D$, we get
\[
|g'(z)|\le k|h'(z)|\lesssim \frac{1}{(1-|z|^2)^{\alpha_K+1}}\quad (z\in\mathbb D).
\]
Moreover,
\[
|h''(z)|\le \left|\frac{h''(z)}{h'(z)}\right|\,|h'(z)|
\lesssim \frac{1}{(1-|z|^2)^{\alpha_K+2}}.
\]
Applying Schwarz--Pick lemma to $\varphi={w}/{k}$ gives $$|w'(z)|\lesssim \frac{1}{1-|z|^2},$$ and hence
\[
|g''(z)|
=|w'(z)h'(z)+w(z)h''(z)|
\lesssim \frac{1}{(1-|z|^2)^{\alpha_K+2}}.
\]
Since $f_z=h'$ and $f_{\bar z}=\overline{g'}$, we have
\[
|f_z(z)|+|f_{\bar z}(z)|
=|h'(z)|+|g'(z)|
\le (1+k)|h'(z)|
\lesssim \frac{1}{(1-|z|^2)^{\alpha_K+1}}.
\]

\medskip
{(i)} For $a\in\mathbb D$, set
\[
I_p(a):=\int_{\mathbb D}\bigl(|f_z(z)|+|f_{\bar z}(z)|\bigr)^p(1-|z|^2)^q(1-|\sigma_a(z)|^2)^s\,dA(z).
\]
Then
\begin{align*}
I_p(a)
&\lesssim \int_{\mathbb D}(1-|z|^2)^{q-p(\alpha_K+1)}(1-|\sigma_a(z)|^2)^s\,dA(z)\\
&= (1-|a|^2)^s\int_{\mathbb D}\frac{(1-|z|^2)^{q+s-p(\alpha_K+1)}}{|1-\overline a z|^{2s}}\,dA(z).
\end{align*}
If we take
\[
t:=q+s-p(\alpha_K+1),\quad c:=s-q-2+p(\alpha_K+1),
\]
then $2+t+c=2s$. Applying \cite[Lemma 3.10]{ZK} as in the proof of (i) of Theorem \ref{M-th3}, we obtain (for $t>-1$)
\[
I_p(a)\lesssim
\begin{cases}
(1-|a|^2)^s, & c<0,\\[1mm]
(1-|a|^2)^s\log\!\dfrac{1}{1-|a|^2}, & c=0,\\[2mm]
(1-|a|^2)^{s-c}=(1-|a|^2)^{q+2-p(\alpha_K+1)}, & c>0.
\end{cases}
\]
Assume that \eqref{F_h(p,q,s)-p} holds. Then $t>-1$ and $q+2-p(\alpha_K+1)>0$. Hence $$\sup\limits_{a\in\mathbb D}I_p(a)<\infty,$$
which shows that $f\in F_h(p,q,s)$.

\medskip
{(ii)} Since $f_z=h'$ is analytic, it suffices to show that
\[
\sup_{a\in\mathbb D}\int_{\mathbb D}|h''(z)|^r(1-|z|^2)^q(1-|\sigma_a(z)|^2)^s\,dA(z)<\infty.
\]
Using $$|h''(z)|\lesssim \frac{1}{(1-|z|^2)^{\alpha_K+2}}$$ and arguing as above, we reduce to the same three-case estimate in \cite[Lemma 3.10]{ZK} with
\[
t:=q+s-r(\alpha_K+2),\quad c:=s-q-2+r(\alpha_K+2),
\]
and obtain uniform boundedness in $a$ provided
\[
q+s-r(\alpha_K+2)>-1,\quad q+2-r(\alpha_K+2)>0,
\]
which is equivalent to \eqref{F_h(p,q,s)-r}. Hence $f_z=h'\in F(r,q,s)$.

Since $\overline{f_{\bar z}}=g'$, it remains to verify $g'\in F(r,q,s)$, i.e., the defining integral condition for $g''$.
This follows from $$|g''(z)|\lesssim \frac{1}{(1-|z|^2)^{\alpha_K+2}}$$ by the same argument and under the same condition \eqref{F_h(p,q,s)-r}.

Finally, using
\[
-i f_{\theta}=z h'(z)-\overline{z}\,\overline{g'(z)},\quad
bf_{b}=z h'(z)+\overline{z}\,\overline{g'(z)},
\]
differentiation yields
\[
|(f_\theta)_z|+|(f_\theta)_{\bar z}|
\lesssim |h'(z)|+|h''(z)|+|g'(z)|+|g''(z)|,
\]
and similarly for $|(bf_b)_z|+|(bf_b)_{\bar z}|$. Hence by the above pointwise bounds, we obtain
\[
\bigl(|(f_\theta)_z|+|(f_\theta)_{\bar z}|\bigr)^r
\lesssim (1-|z|^2)^{-r(\alpha_K+2)},
\quad
\bigl(|(bf_b)_z|+|(bf_b)_{\bar z}|\bigr)^r
\lesssim (1-|z|^2)^{-r(\alpha_K+2)}.
\]
Applying \cite[Lemma 3.10]{ZK} once more as above, we conclude that
\[
f_\theta,\,bf_b\in F_h(r,q,s)
\]
whenever \eqref{F_h(p,q,s)-r} holds. 
This finishes the proof of part (ii), and thus of Theorem \ref{F-th3} as well.
\end{proof}

\vskip.01in
	\noindent{\bf Acknowledgements}.
 Z.-G. Wang was partially supported by the \textit{Key Project of Education Department of Hunan Province} under Grant no. 25A0668, and the \textit{Natural Science Foundation of Changsha} under Grant no. kq2502003 of the P. R. China.

\vskip.05in
	\noindent{\bf Conflicts of interest}. The authors declare that they have no conflict of interest.
	
\vskip.05in
\noindent{\bf Data availability statement}.  Data sharing is not applicable to this article as no datasets were generated or analysed during the current study.

\end{document}